\documentclass[10pt,a4paper]{amsart}

\usepackage[utf8]{inputenc}
\usepackage{t1enc}
\usepackage{microtype}

\usepackage{amssymb,amsfonts}
\usepackage{mathtools}

\usepackage[margin=2.7cm,marginpar=2cm]{geometry}

\usepackage[unicode]{hyperref}

\usepackage[capitalize,nameinlink]{cleveref}
\numberwithin{equation}{section}

\def\equationautorefname~#1\null{Equation~(#1)\null}

\let\eps\varepsilon
\usepackage{thmtools}

\declaretheorem[
style=plain,
name=Theorem,
numbered=yes,
refname={Theorem,Theorems},
Refname={Theorem,Theorems}
]{Thm}
\declaretheorem[
style=plain,
name=Proposition,
numberlike=Thm,
refname={Proposition,Propositions},
Refname={Proposition,Propositions}
]{Prop}

\declaretheorem[
style=definition,
name=Example,
numberlike=Thm,
refname={Example,Examples},
Refname={Example,Examples}
]{Eg}
\declaretheorem[
style=plain,
name=Corollary,
numberlike=Thm,
refname={Corollary,Corollaries},
Refname={Corollary,Corollaries}
]{Cor}
\declaretheorem[
style=plain,
name=Lemma,
numberlike=Thm,
refname={Lemma,Lemmas},
Refname={Lemma,Lemmas }
]{Lem}

\newcommand{\dd}{\mathrm{d}}

\usepackage{nicefrac}
\newcommand{\half}{{\nicefrac{1\mskip-2mu}{\mskip-2mu2}}}

\usepackage[normalem]{ulem}
\renewcommand{\vec}[1]{\uline{\boldsymbol{#1}}}

\newcommand{\poch}[2]{\{#1\}_{#2}}
\DeclareMathOperator{\wwt}{wt}
\DeclareMathOperator{\ddp}{dp}
\DeclareMathOperator{\hht}{ht}
\newcommand{\Z}{\mathbb{Z}}

\usepackage{shuffle}
\DeclareMathOperator{\reg}{reg}
\DeclareMathOperator{\Li}{Li}

\newmuskip\pFqmuskip
\newcommand*\pFq[6][8]{%
	\begingroup 
	\pFqmuskip=#1mu\relax
	\mathchardef\normalcomma=\mathcode`,
	\mathcode`\,=\string"8000
	\begingroup\lccode`\~=`\,
	\lowercase{\endgroup\let~}\pFqcomma
	{}_{#2}F_{#3}{\left[\genfrac..{0pt}{}{#4}{#5};#6\right]}%
	\endgroup
}
\newcommand{\pFqcomma}{{\normalcomma}\mskip\pFqmuskip}

\newcommand{\indsh}{\mathbin{\widetilde{\shuffle}}}

\usepackage{bbm}
\newcommand{\one}{\mathbbm{1}}

\begin{document}
	
	\title{Multiple zeta values ending with a fixed string}
	\date{June 8, 2026}
	
\author{Steven Charlton}
\address{}
\email{mail@stevencharlton.net}

	\subjclass[2020]{}
	
	\keywords{}

	\begin{abstract} 
	We give a generating series expression for the sum of all multiple zeta values of a fixed weight, depth, and height, which end with a given string \( \vec{\ell} = (\ell_1,\ldots,\ell_r) \); this builds upon the proof of the Ohno-Zagier Theorem.  In particular, the sum of all multiple zeta values of fixed weight, depth and ending with \( \vec{\ell} \) has bounded depth \( \leq \ell_1 + \cdots + \ell_r \).  We give some applications to evaluations of interpolated multiple zeta values, and to the generating series of double zeta values.
	\end{abstract}

	\maketitle

	\section{Introduction}
	
	For any index \( \vec{k} = (k_1,\ldots,k_d) \in \mathbb{Z}_{>0}^d \), 
	\begin{itemize}
		\item the \emph{weight} is \(  \wwt(\vec{k}) = k_1 + \cdots + k_d \),  the sum of all entries;
		\item the \emph{depth} is \( \ddp(\vec{k}) = d \), the number of entries; and
		\item the \emph{height} is \( \hht(\vec{k}) =  \#\{ i \mid k_i > 1 \} \), the number of entries \( >1 \).
	\end{itemize}
	We call a index \( \vec{k} = (k_1, \ldots, k_d) \) \emph{admissible} when \( k_d \geq 2 \).  Write
	\begin{align*}
		I(w,d,h) & {}= \{ \vec{k} = (k_1,\ldots,k_d) \in \Z_{>0}^d \mid \wwt(\vec{k}) = w, \hht(\vec{k}) = h \} \,, \text{ and } \\
		I_0(w,d,h) & {}= \{ \vec{k} = (k_1,\ldots,k_d) \in \Z_{>0}^d \mid \wwt(\vec{k}) = w, \hht(\vec{k}) = h, \text{$k_d \geq 2$}\} 
	\end{align*}
	for the set of all indices of given weight \( w \), depth \( d \) and height \( h \), and for the subset of admissible ones, respectively.  Write 
	\(
	 I(w,d) = \bigcup\nolimits_{h=1}^\infty I(w,d,h) \), and \(  I_0(w,d) = \bigcup\nolimits_{h=1}^\infty I_0(w,d,h) 
	\)
	for the corresponding sets without restrictions on the height \( h \).  For an admissible index \(\vec{k}\), define the \emph{multiple zeta value} (MZV) as
	\begin{equation}\label{eqn:def:mzv}
		\zeta(\vec{k}) = \zeta(k_1, k_2, \ldots, k_d) \coloneqq \sum_{0 < n_1 < n_2 < \cdots < n_d} \frac{1}{n_1^{k_1} n_2^{k_2} \cdots n_d^{k_d}} \,.
	\end{equation}\medskip
	
	Ohno-Zagier \cite{OhnoZagier01} studied the generating series for sums of MZV's of fixed weight, depth and height.
	\begin{Thm}[Ohno-Zagier, \cite{OhnoZagier01}]
		The generating series for the sum of all MZV's of fixed weight \( w \), depth \( d \), and height \( h \) is given by
		\[
			\sum_{w, d, h \geq 1 } \biggl( \, \sum\nolimits_{\substack{\vec{k} \in I_0(w, d, h)}} \zeta(\vec{k}) \! \biggr) x^{w - d - h} y^{d-h} z^{h} = \frac{1}{x y - z} \Biggl( 1 - \exp\Biggl( \sum_{n=2}^\infty \frac{\zeta(n)}{n} S_n(x,y,z) \Biggr) \! \Biggr) \,,
		\]
		where \( S_n(x,y,z) \in \mathbb{Z}[x,y,z] \) is the family of polynomials defined by
		\[
			S_n(x,y,z) = x^n + y^n - \alpha^n - \beta^n \,, \quad \alpha,\beta = \frac{x + y \pm \sqrt{(x+y)^2 - 4 z}}{2} \,.
		\]
	\end{Thm}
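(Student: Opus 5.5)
We follow the strategy of Ohno and Zagier and work with multiple polylogarithms in one variable. For an index \( \vec{k} \) we set \( \Li_{\vec{k}}(t) = \sum_{0<n_1<\cdots<n_d} t^{n_d}/(n_1^{k_1}\cdots n_d^{k_d}) \). We form the generating function
\[
	\Phi(t) = \Phi(x,y,z;t) = \sum_{w,d,h} \Bigl( \sum\nolimits_{\vec{k}\in I(w,d,h)} \Li_{\vec{k}}(t) \Bigr) x^{w-d-h} y^{d-h} z^{h-1} \,,
\]
where the sum runs over all indices, admissible or not. Since \( \Li_{\vec{k}}(1) = \zeta(\vec{k}) \) for admissible \( \vec{k} \), the goal is to extract the admissible part of \( \Phi \) at \( t=1 \).

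\textbf{Differential equation.} The first step is to derive a differential equation for \( \Phi \) from the rules
\[
	t\frac{\dd}{\dd t}\Li_{\vec{k},k}(t) = \Li_{\vec{k},k-1}(t) \quad (k\ge 2), \qquad (1-t)\frac{\dd}{\dd t}\Li_{\vec{k},1}(t) = \Li_{\vec{k}}(t) \,,
\]
together with \( \Li_1(t) = -\log(1-t) \). Split \( \Phi = \Phi_0 + \Phi_1 \) according to whether the last entry is \( \ge 2 \) or equal to \( 1 \). Lowering a last entry \( k \ge 3 \) to \( k-1 \) removes one factor of \( x \). Lowering a last entry \( 2 \) to \( 1 \) converts the monomial \( z \) into \( y \). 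Stripping a final \( 1 \) removes a factor of \( y \) and exposes an arbitrary previous entry. The two parts should then satisfy a first-order linear system of the form
\[
	t\,\Phi_0' = x\Phi_0 + y\Phi_1 + (\text{correction for the entry } 2), \qquad (1-t)\Phi_1' = y\Phi_1 + z\Phi_0 + 1 \,.
\]
Eliminating one unknown gives a second-order Fuchsian equation for \( \Phi_0 \), essentially
\[
	t(1-t)\Phi_0'' + \bigl((1-x)(1-t) - yt\bigr)\Phi_0' + (xy - z)\Phi_0 = \text{const} \,.
\]
This is a hypergeometric equation whose parameters \( a, b \) satisfy \( a+b = x+y \) and \( ab = z \), so that \( a, b = \alpha, \beta \). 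Getting this system exactly right, with correct handling of the boundary terms and the normalisation of the inhomogeneity, is routine but delicate bookkeeping.

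\textbf{Solving and evaluating at \( t=1 \).} Next, solve the equation with the initial condition \( \Phi_0(0) = 0 \), as a power series in \( t \). The solution has the shape
\[
	\Phi_0(t) = \frac{1}{xy-z}\Bigl( 1 - \pFq{2}{1}{\alpha-x,\ \beta-x}{1-x}{t} \Bigr) \,,
\]
up to checking the constant. Evaluate at \( t=1 \) by Gauss's summation
\[
	{}_2F_1(a,b;c;1) = \frac{\Gamma(c)\,\Gamma(c-a-b)}{\Gamma(c-a)\,\Gamma(c-b)} \,.
\]
This is valid as a formal power series identity in \( x, y, z \) near \( 0 \), using \( c-a-b = 1-y \) and \( \alpha + \beta = x+y \). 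The result is
\[
	\frac{\Gamma(1-x)\,\Gamma(1-y)}{\Gamma(1-\alpha)\,\Gamma(1-\beta)} \,.
\]
Finally apply \( \log\Gamma(1-u) = \gamma u + \sum_{n\ge 2} \zeta(n)u^n/n \). The \( \gamma \)-terms cancel because \( x+y = \alpha+\beta \), and what remains is exactly \( \exp\bigl(\sum_{n\ge 2}\frac{\zeta(n)}{n}S_n(x,y,z)\bigr) \). This yields the stated formula.

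\textbf{Main obstacle.} The main obstacle is justifying the passage to \( t=1 \). One needs Abel's theorem for the coefficientwise convergence of the admissible part. One also needs to show that the non-admissible part \( \Phi_1 \) does not contaminate the limit. I would handle both by working coefficientwise in \( x, y, z \): each coefficient of \( \Phi_0 \) is a finite sum of admissible \( \Li \)'s, hence continuous up to \( t=1 \). Uniformity in the formal variables is not needed, because Gauss's formula holds coefficientwise once \( \operatorname{Re}(c-a-b) > 0 \) for small parameters.
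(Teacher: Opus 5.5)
Your route is the Ohno--Zagier argument. That is also all the paper relies on: it does not reprove the theorem, but in the proof of \autoref{thm:phiell:gs} it recalls the same first-order system, second-order equation and \({}_2F_1\) solution. Your second-order equation, with \((1-x)(1-t)\), is the correct one; the paper's \((1-x)(1-y)\) is a misprint. Your solution, Gauss's formula with \(c-a-b=1-y\), and the \(\log\Gamma\) expansion are all right. The genuine problem is the one step with real content, the derivation of that equation. The first-order system you wrote does not follow from your own (correct) verbal rules, and it does not produce your equation. Take the unspecified correction term to be zero (none is needed) and eliminate \(y\Phi_1 = t\Phi_0' - x\Phi_0\). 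Your two equations then give
\[
t(1-t)\Phi_0'' + \bigl((1-x)(1-t) - yt\bigr)\Phi_0' + (xy - yz)\Phi_0 = y ,
\]
which changes the product of the hypergeometric parameters and hence the final answer. So the equation you state does not follow from the system you wrote.

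The fix is as follows. Weight the full series \(\Phi\) by \(z^{h}\), not \(z^{h-1}\); otherwise the empty index and \((1,\ldots,1)\) contribute \(z^{-1}\). Keep \(z^{h-1}\) only for the admissible part \(\Phi_0\), and set \(\Phi_1 = \Phi - 1 - z\Phi_0\). Your rules then give
\[
t\,\Phi_0' = x\,\Phi_0 + \frac{1}{y}\,\Phi_1 , \qquad (1-t)\,\Phi_1' = y\,\bigl(1 + z\,\Phi_0 + \Phi_1\bigr) ,
\]
with no separate correction term, because the entry-\(2\) case is exactly the \(y^{-1}\Phi_1\) term. This is the system the paper quotes. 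Substituting \(\Phi_1 = y(t\Phi_0' - x\Phi_0)\) yields your equation with right-hand side exactly \(1\). Its hypergeometric parameters are \(\alpha-x,\beta-x\), with sum \(y-x\) and product \(z-xy\); it is \(\alpha,\beta\), not the hypergeometric parameters, that have sum \(x+y\) and product \(z\).

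Two smaller remarks. First, your \(z^{h-1}\) normalisation of \(\Phi_0\) is the correct Ohno--Zagier one. The statement as printed, with \(z^{h}\), is off by a factor of \(z\): its right-hand side has constant term \(\zeta(2)\) while its left-hand side has none. You should say you are proving the corrected statement rather than ``the stated formula''. Second, the limit \(t\to1\) does use some uniformity. The terms of the \({}_2F_1\) series are \(O(m^{\operatorname{Re} y-2})\) uniformly on a small polydisc in \((x,y,z)\), so the series converges uniformly for \(0\le t\le 1\) there. That is what lets you interchange \(t\to1\) with extracting coefficients.
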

	In particular, the sum of all MZV's of fixed weight \( w \), depth \( d \) and height \( h \) is a polynomial in Riemann zeta values.  The main result of this note is a version where \( \vec{k} \) is required to end in a given string \( \vec{\ell} \).  In particular, if we weight by a binomial involving the height, the sums have restricted depth.
	\begin{Thm}[\autoref{cor:wtsum:ell}]\label{thm:weightedsum} 
		Fix the weight \( w \), depth \( d \) and a parameter \( \eta \).  Fix a admissible index \( \vec{\ell} = (\ell_1,\ldots,\ell_r) \) with \( \ell_r > 1 \).  Then the weighted sum,
		\[
			\sum\nolimits_{\substack{\vec{k} \in I_0(w,d) \\ \text{$\vec{k}$ end in $\vec{\ell}$}}} \binom{\hht(\vec{k})-1}{\eta} \zeta(\vec{k}) 
		\]
		has depth \( \leq \eta + \wwt(\vec{\ell}) \), independent of \( w \) and \( d \).
	\end{Thm}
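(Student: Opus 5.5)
The plan is to rerun the Ohno--Zagier argument one level earlier, at the level of multiple polylogarithms $\Li_{\vec{k}'}(t)$ with $\vec{k}'$ arbitrary (not necessarily admissible), and then to append the fixed tail $\vec{\ell}$ by iterated integration. Concretely, I will form
\[
\Phi(t) = \sum_{\vec{k}'} \Li_{\vec{k}'}(t)\, x^{\wwt(\vec{k}')-\ddp(\vec{k}')-\hht(\vec{k}')} y^{\ddp(\vec{k}')-\hht(\vec{k}')} z^{\hht(\vec{k}')}.
\]
From the Ohno--Zagier proof, $\Phi$ satisfies a second-order hypergeometric differential equation. Its solution is
\[
\Phi(t) = \frac{1}{xy-z}\Bigl(1 - \pFq{2}{1}{\alpha-x,\beta-x}{1-x}{t}\Bigr).
\]
Writing $a=\alpha-x$ and $b=\beta-x$, we have $a+b=y-x$ and $ab=z-xy$. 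Expanding $\Phi(t)=\sum_{n\ge1} c_n t^n$, the coefficient is
\[
c_n = -\frac{1}{n}\cdot\frac{1}{xy-z}\cdot ab\prod_{m=1}^{n-1}\frac{1+\frac{a+b}{m}+\frac{ab}{m^2}}{1-\frac{x}{m}}
\]
(up to routine normalisation). The prefactor $1/(xy-z)$ cancels the leading $ab$.

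Next I append $\vec{\ell}=(\ell_1,\ldots,\ell_r)$. Applying the iterated-integral operators for $\vec{\ell}$ to $t^n/n$ and setting $t=1$ gives a nested sum $\sum_{n<n_1<\cdots<n_r} n_1^{-\ell_1}\cdots n_r^{-\ell_r}$ with $n$ as the innermost variable. Hence the generating series for $\sum_{\vec{k}\text{ ends in }\vec{\ell}}\zeta(\vec{k})$ is an explicit nested sum in which the innermost summand is the coefficient $c_n$ above. Here I have to account for the bookkeeping shift: appending $\vec{\ell}$ adds known fixed amounts to the weight, depth and height, so it only multiplies the series by a monomial. To get the binomially weighted sum in \autoref{thm:weightedsum}, I will replace $z$ by $(1+s)z$ (with the appropriate monomial correction for the $\hht(\vec{\ell})$ part) and extract the coefficient of $s^\eta$. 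This uses $\sum_h \binom{h-1}{\eta} f_h=[s^\eta]\sum_h (1+s)^{h-1}f_h$.

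The depth count then runs as follows. In the product, the ``height'' variable enters only through $ab=z-xy$. After the substitution, $ab=(1+s)z-xy$, whose $s$-part is $sz$. Taking $[s^\eta]$ therefore selects at most $\eta$ factors $sz/m^2$ from the product; each such factor is a single harmonic-type summation and raises depth by one. The remaining factors are
\[
\prod_m\frac{1+(y-x)/m+(z-xy)/m^2}{1-x/m}.
\]
I must show that, once combined with the outer $\vec{\ell}$-sums, these factors do not produce unbounded depth. The idea is to evaluate the $s^0$-part in closed form as a ratio of Gamma functions, $\frac{\Gamma(n+a)\Gamma(n+b)\Gamma(1-x)}{\Gamma(n)\Gamma(n+1-x)\,\Gamma(1+a)\Gamma(1+b)}$-type. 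Then I will use the telescoping/partial-fraction structure of the hypergeometric sum, i.e. Gauss-type summation of the tail, to absorb the infinite product. Each outer variable $n_j$ carries weight $\ell_j$. I expect each unit of $\ell_j$ to account for at most one further nested summation once the hypergeometric sum is resummed, in the spirit of the Ohno--Zagier reduction to $\exp\bigl(\sum\zeta(n)S_n/n\bigr)$. That would give total depth $\le \eta+\wwt(\vec{\ell})$.

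This last step is the main obstacle: proving that the Gamma-ratio part, summed against $n^{-\ell}$-type weights, has depth bounded by $\wwt(\vec{\ell})$ uniformly in the coefficients of $x$ and $y$. My first attempt would be induction on $\wwt(\vec{\ell})$. I would use the contiguous relations of ${}_2F_1$ (equivalently, integration by parts in $t$) to trade each $1/n_j$ for a shift of hypergeometric parameters, reducing at the base case to Gauss's ${}_2F_1(1)$ evaluation. Expanding that evaluation in $x$ and $y$ gives only products of single zeta values, i.e. depth $0$ contributions beyond those counted.
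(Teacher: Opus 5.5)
You have a genuine gap: the depth bound, which is the whole content of the statement, is left as an expectation. Your set-up is essentially the paper's, up to bookkeeping. The ${}_2F_1$ formula you quote is Ohno--Zagier's $\Phi_0$, not $\Phi$. After the cancellation $-ab/(xy-z)=1$, your $c_n=\frac1n\prod_{m<n}\frac{1+(y-x)/m+(z-xy)/m^2}{1-x/m}$ is exactly the coefficient of $(t\frac{\dd}{\dd t}-x)\Phi_0=\frac1y(\Phi-1-z\Phi_0)$, the series of the $\Li_{\vec{k},1}(t)$. So $\ell_1$ belongs on $n$ itself (a factor $n^{1-\ell_1}$), and only $\ell_2,\ldots,\ell_r$ sit on larger variables.

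Because you keep $z$ free, your $s^0$-part still contains $(z-xy)/m^2$, and you set yourself the task of controlling the entire all-heights series. But summing over heights at fixed $(w,d)$ forces a specialisation such as $y\mapsto xy$, $z\mapsto x^2y(1+s)$. This sends $x^{w-d-h}y^{d-h}z^h\mapsto x^wy^d(1+s)^h$ and, crucially, $y-x\mapsto x(y-1)$ and $z-xy\mapsto x^2ys$. So $[s^\eta]$ selects exactly $\eta$ factors $1/m^2$, and all other factors contribute only $1/m$'s.

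Even then the depth is not bounded directly, since those factors produce arbitrarily long strings of $1$'s. What is bounded is weight minus depth. The coefficient of $(y-x)^a(z-xy)^\eta(-x)^c$ involves the words $(z_1^a\indsh z_2^\eta)\,z_{\ell_1}\,(z_1^c\ast z_{\ell_2}\cdots z_{\ell_r})$ and their analogues with $z_{\ell_1+1}$. These have depth $\ge a+\eta+c+1$ and weight $a+2\eta+c+\wwt(\vec{\ell})$ (resp.\ ${}+1$). Duality turns this into depth $\le\eta+\wwt(\vec{\ell})$, and duality is the idea missing from your plan.

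The second missing ingredient is the handling of the normalising factor $\prod_{q<n}(1-x/q)^{-1}$. Expanding it as $\sum_c\zeta^\star_{<n}(\{1\}^c)x^c$ and multiplying out creates entries $>1$ and hides the bound. The paper truncates at $N$ and writes $\prod_{q<n}(1-x/q)^{-1}=(1-\frac{x}{n})\prod_{q\le N}(1-x/q)^{-1}\prod_{n<q\le N}(1-x/q)$. The last factor stuffles $1$'s into the $\vec{\ell}$-tail, keeping the depth of that part $\ge c$. The factor $\prod_{q\le N}(1-x/q)^{-1}=\sum_i\zeta^\star_{\le N}(\{1\}^i)x^i$ is absorbed in the limit, turning the divergent truncated sums into $\reg_{\shuffle,0}$-values (\autoref{lem:limit}, via \autoref{lem:trtost} and \autoref{lem:sttosh}). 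Duality applies to these, and the dual values $\reg_{\shuffle,0}\zeta_{k_0}(\ldots)$ expand without changing depth.

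The route you propose instead does not work. The tails $\sum_{n<i_2<\cdots<i_r}i_2^{-\ell_2}\cdots i_r^{-\ell_r}$ are not hypergeometric terms in $n$. Even a single extra factor $n^{-1}$ raises the order of the series rather than shifting its parameters. So neither contiguous relations of ${}_2F_1$ nor Gauss's theorem is available. Already for $\vec{\ell}=(2)$ the generating series is a ${}_4F_3$ at $1$, with upper parameters $1+a,1+b,1,1$ and lower parameters $1-x,2,2$ in your notation; compare \autoref{prop:dzv:hyper}.
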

	We will state and prove a more precise result in \autoref{thm:phiell:gs}, which gives an expression for these sums in terms of shuffle-regularised MZV's of the required depth after duality. 	We will recall the details of shuffle regularisation in \autoref{sec:reg}; for simplicity in the introduction, we can take as a definition
	\[
	\reg_{\shuffle,0} \zeta_{k_0}(k_1,\ldots,k_d) \coloneqq (-1)^{k_0} \sum_{\substack{i_1 + \cdots + i_d = k_0 \\ i_1,\ldots,j_d\geq 0}} \tbinom{k_1 + i_1 - 1}{i_1} \cdots \tbinom{k_d + i_d - 1}{i_d} \zeta(k_1 + i_1, \ldots, k_d + i_d) \,,
	\]
	for \( k_d \geq 2 \).	As an example of \autoref{thm:weightedsum}, we have the following
	\begin{Eg}
		In the case \( \vec{\ell} = (2,2) \) and \( \eta = 0 \), the following identity holds
	\begin{align*}
		& \sum\nolimits_{\substack{\vec{k} \in I_0(w,d) \\ \text{$\vec{k}$ ends in $(2,2)$}}} \zeta(\vec{k}) = \\
		& (-1)^{w+d} \reg_{\shuffle,0} \Bigg\{ \, \begin{aligned}[t]
		& \sum_{\substack{i+j+k\\=w-4}} \tbinom{i}{d-2} \zeta_k(j+2,i+2)
		+\sum_{\substack{i+j+k\\=w-5}} \tbinom{i}{d-2} \zeta_k(1,j+2,i+2)
		\\[-1ex]
		&+\sum_{\substack{i+j+k\\=w-5}} \tbinom{i}{d-2} \zeta_k(j+2,1,i+2)
		+\sum_{\substack{i+j+k\\=w-6}} \tbinom{i}{d-2} \zeta_k(1,j+2,1,i+2) \Bigg\} \,,
		\end{aligned}		
	\end{align*}
	which invokes only MZV's of depth \( \leq \wwt(\vec{\ell}) = 4 \). 
	\end{Eg}
	
	As an application of \autoref{thm:weightedsum}, we will obtain an evaluation for certain interpolated multiple zeta values.   Recall \emph{multiple zeta star values} (MZSV's) are defined by replacing the strict inequalities between summation variables in \autoref{eqn:def:mzv} with non-strict ones, namely  
		\begin{equation}\label{eqn:def:mzsv}
	\zeta^\star(\vec{k}) = \zeta^\star(k_1, k_2, \ldots, k_d) \coloneqq \sum_{0 < n_1 \leq n_2 \leq \cdots \leq n_d}  \frac{1}{n_1^{k_1} n_2^{k_2} \cdots n_d^{k_d}} \,.
	\end{equation}	
	The \emph{interpolated} multiple zeta values were introduced by Yamamoto \cite{YamamotoInterpolation13} to naturally connect the usual MZV's (at $r=0$) with the MZSV's (at $r=1$).  The interpolated MZV's are defined by
	\begin{align*}
	\zeta^r(k_1,\ldots,k_d) & 
	\coloneqq \sum_{0 < n_1 \leq n_2 \leq \cdots \leq n_d} \frac{r^{\#\{ i \mid n_i = n_{i+1} \}}}{n_1^{k_1} n_2^{k_2} \cdots n_d^{k_d}}\
	 = \sum_{\circ_i = \text{`$+$' or `,'}} r^{\#\{ i \mid \circ_i = {+}\}}\zeta(k_1 \circ_1 k_2 \circ_2 \cdots \circ_{k_{d-1}} k_d)
	\end{align*}
	From \autoref{thm:weightedsum}, we can then derive the following evaluation.
	\begin{Prop}[\autoref{prop:zr:2112}]
		The following evaluation holds for all \( n \geq 0 \), 
		\[
			\zeta^r(2, \{1\}^n, 2) = \frac{1}{(1-r)r^2} \sum_{\substack{i+j = n+4 \\ i \geq 1, j \geq 2}} r^j \zeta(i, j) - \frac{r^{n+4}}{(1-r)r^2} \zeta(n+4) \,.
		\]
	\end{Prop}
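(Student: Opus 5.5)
Expand \( \zeta^r(2,\{1\}^n,2) \) by its definition as a sum over choices of `\(+\)' or `,' in the \( n+1 \) gaps. Each such choice produces an index \( \vec{k} \) of weight \( w = n+4 \). The interpolation exponent equals \( (n+2) - \ddp(\vec{k}) \), so the power of \( r \) depends only on the depth. Group the terms by depth \( d = \ddp(\vec{k}) \):
\[
\zeta^r(2,\{1\}^n,2) = \sum_{d} r^{n+2-d} \sum_{\vec{k}} \zeta(\vec{k}),
\]
where the inner sum is over all indices obtained from \( (2,\{1\}^n,2) \) by merging adjacent blocks.

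The key observation concerns which indices appear. Any such merge has the form \( (a_1,\ldots,a_d) \) with \( a_1 \geq 2 \) and \( a_d \geq 2 \), and every entry \( \geq 1 \). Merging consecutive runs of \( (2,1,\ldots,1,2) \) gives every composition of \( n+4 \) whose first part is \( \geq 2 \) and last part is \( \geq 2 \), each exactly once, when \( d \geq 2 \). The depth \( 1 \) term is \( \zeta(n+4) \). So I will rewrite the inner sum as the sum over \( \vec{k} \in I_0(w,d) \) with first entry \( \geq 2 \). By duality, \( \zeta(\vec{k}) \) with first entry \( \geq 2 \) corresponds to indices ending in \( 2 \) after dualising, with depth and height exchanged appropriately. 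The plan is to pass to the dual side, where this becomes a sum over \( I_0(w,d') \) of indices ending in \( \vec{\ell} = (2) \), or equivalently \( (1,\ldots) \) patterns. Then I apply \autoref{thm:weightedsum}, in its precise form \autoref{thm:phiell:gs}, with \( \vec{\ell} = (2) \). That result expresses the sum in depth \( \leq \eta + 2 \), and with \( \eta = 0 \) in depth \( \leq 2 \). An alternative is to run the generating-series computation of \autoref{thm:phiell:gs} directly, with the variable tracking depth specialised so that \( y \mapsto r \)-weights.

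Concretely, I will form the generating function \( \sum_d r^{-d} \sum_{\vec{k}} \zeta(\vec{k}) \) using the generating series of \autoref{thm:phiell:gs} with \( \vec{\ell}=(2) \). Height is unconstrained here, so I specialise so that the height variable is not tracked. The binomial \( \binom{i}{d-2} \) appearing in the output (as in the Example) then resums as
\[
\sum_d \tbinom{i}{d-2} r^{n+2-d} = r^{n-i}(1+r)^i,
\]
up to normalisation. This should collapse the regularised expression into a short combination of \( \zeta_k(j+2,i+2) \)-type terms. Those are then expanded via the definition of \( \reg_{\shuffle,0} \) into honest double zeta values \( \zeta(i,j) \). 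The factor \( \frac{1}{(1-r)r^2} \) should arise from summing a geometric series in \( r \), coming from the binomial expansion in the regularisation combined with the \( r \)-weights. The depth-one term \( -r^{n+4}\zeta(n+4)/((1-r)r^2) \) is the correction making the \( i \geq 1 \) range consistent, since it matches the \( i=0 \) boundary.

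The main obstacle is the bookkeeping in the last step. This means correctly tracking signs \( (-1)^{w+d} \), the regularisation binomials, and the duality step, so that everything telescopes to exactly \( \sum r^j\zeta(i,j) \). To control this I will first verify the formula at \( n=0 \) from the definition, where \( \zeta^r(2,2) = \zeta(2,2) + r\zeta(4) \). The right side must reproduce this via \( \zeta(1,3)+\zeta(2,2) \) relations, specifically \( \zeta(1,3) = \zeta(4)/4 \) and \( \zeta(2,2) = 3\zeta(4)/4 \). I will then confirm that the \( r \)-dependence matches: in effect, check that the claimed right side is a polynomial in \( r \), which requires its numerator to vanish at \( r=1 \). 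That vanishing is the sum formula \( \sum \zeta(i,j) = \zeta(n+4) \), a useful consistency check. If the direct route via \autoref{thm:phiell:gs} becomes messy, a fallback is induction on \( n \) using the shuffle or harmonic product \( \zeta(2)\cdot\zeta^r(\ldots) \). However, I would try the generating series specialisation first.
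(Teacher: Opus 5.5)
Your outline follows the paper's proof: expand $\zeta^r(2,\{1\}^n,2)=\sum_d r^{n+2-d}\sum\zeta(\vec k)$ over admissible $\vec k$ of weight $n+4$, depth $d$, $k_1\geq 2$; dualise to indices ending in $2$; apply \autoref{cor:wtsum:ell} with $\vec\ell=(2)$, $\eta=0$; resum over depth; expand the regularised values. However, the one step you make explicit, the resummation over depth, is set up wrongly, and as written it would not give the stated formula.

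Duality preserves height and sends depth $d$ to $D=(n+4)-d$. So on the dual side, where the corollary's binomial lives, the weight is $r^{n+2-d}=r^{D-2}$. You instead pair the undualised exponent with the dual depth in the binomial; using $r^{n+2-D}$ there computes $r^{n}\zeta^{1/r}(2,\{1\}^n,2)$ instead. For $\vec\ell=(2)$ the binomial is $\binom{i}{D-1}$, not $\binom{i}{D-2}$ (that is the $\vec\ell=(2,2)$ case of the Example). The corollary's sign $(-1)^{n+D+1}$ cannot be left to bookkeeping, because it decides between $1-r$ and $1+r$. The correct resummation is
\[
\sum_{D\geq 1}(-1)^{n+D+1}\tbinom{i}{D-1}\,r^{D-2}=(-1)^{n}\,\frac{(1-r)^{i}}{r}\,,
\]
and this factor is what makes the rest work. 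In the paper's generating-series identities for $\reg_{\shuffle,0}\zeta_c(b+2)$ and $\reg_{\shuffle,0}\zeta_c(a+1,b+2)$, the variable of the last argument becomes $-(1-r)x$ and the leading-zero variable becomes $-x$. Their difference, which is what the regularisation produces, is $rx$; this is where the weights $r^j$ come from. The identity $(1-r)^i(-x)^{i+1}=\bigl(-(1-r)x\bigr)^{i+1}/(1-r)$ supplies the factor $\frac{1}{1-r}$. In particular, the term $-r^{n+4}\zeta(n+4)/((1-r)r^2)$ is simply the contribution of the depth-one terms $\reg_{\shuffle,0}\zeta_j(i+2)$, not an $i=0$ boundary correction.

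Your ``alternative'' is the cleanest way to avoid these slips. Put $x=t$, $y=rt$, $z=rt^2$ in \autoref{thm:phiell:gs}. The left side becomes $\sum_{w,d}t^{w}r^{d}\sum_{\vec k\in I(w,d)}\zeta(\vec k,2)$, so $\zeta^r(2,\{1\}^n,2)$ is $r^{-1}$ times its coefficient of $t^{n+2}$. On the right, all $b>0$ terms vanish since $z-xy=0$, and $(y-x)^a=(r-1)^at^a$ gives the correct factor directly.

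Separately, the sum formula $\sum_{i=1}^{N-2}\zeta(i,N-i)=\zeta(N)$ is a genuine step of the proof, not only a consistency check. The expansion above gives
\[
\zeta^r(2,\{1\}^n,2)=\frac{1}{(1-r)r}\Big[\sum_{\substack{a+b=n+2\\ a\geq 0,\ b\geq 1}}(r^{b}-1)\,\zeta(a+1,b+1)-(r^{n+3}-1)\,\zeta(n+4)\Big]\,,
\]
and you reach the claimed right-hand side only after the sum formula cancels the ``$-1$'' parts. With these corrections your plan is exactly the paper's argument.
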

	When \( r = \half \), this simplifies to a polynomial in single zeta values using a weighted sum formula.%
	\begin{Prop}[\autoref{prop:zh:2112}]
		The following evaluation hold for all \( n \geq 0 \),
		\begin{align*}
		\zeta^\half(2,\{1\}^n,2) = \sum_{\substack{i+j=n+4 \\ i,j \geq 2}} {-}2^{1-i-j}(2 - 2^i)(2 - 2^j) \zeta(i)\zeta(j) + (2^{-1-n} + 2n + 2) \zeta(n+4) \,.
		\end{align*}
	\end{Prop}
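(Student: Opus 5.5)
The plan is to specialise the previous proposition (\autoref{prop:zr:2112}) to \( r = \half \), and then evaluate the resulting weighted sum of double zeta values in terms of products of single zeta values using the double shuffle relations. Write \( N = n+4 \). At \( r=\half \) we have \( \frac{1}{(1-r)r^2} = 8 \), so
\[
\zeta^\half(2,\{1\}^n,2) = 8 \sum_{\substack{i+j=N\\ i\geq 1, j\geq 2}} 2^{-j}\zeta(i,j) - 2^{3-N}\zeta(N)\,.
\]
The \( r=\half \) case is therefore reduced to a closed form for the weighted double sum \( T_N \coloneqq \sum_{i+j=N,\, i\geq1,\, j\geq2} 2^{-j}\zeta(i,j) \). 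This sum includes the boundary term \( 2^{1-N}\zeta(1,N-1) \).

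To evaluate \( T_N \), I will use the two standard generating-function forms of the double shuffle relations in weight \( N \). For \( a,b\geq 2 \), Euler's shuffle decomposition is
\[
\zeta(a)\zeta(b)=\sum_{i+j=N}\Bigl[\tbinom{j-1}{a-1}+\tbinom{j-1}{b-1}\Bigr]\zeta(i,j)\,.
\]
Multiplying by \( x^{a-1}y^{b-1} \) and summing turns the right-hand side into \( \sum \zeta(i,j)\bigl(x^{i-1}+y^{i-1}\bigr)(x+y)^{j-1} \), up to the usual boundary corrections from \( a=1 \) or \( b=1 \). The stuffle relation \( \zeta(a)\zeta(b)=\zeta(a,b)+\zeta(b,a)+\zeta(N) \) gives a second generating identity,
\[
\sum_{a+b=N} x^{a-1}y^{b-1}\zeta(a)\zeta(b) = \sum_{i+j=N}\bigl(x^{i-1}y^{j-1}+y^{i-1}x^{j-1}\bigr)\zeta(i,j) + \frac{x^{N-1}-y^{N-1}}{x-y}\,\zeta(N)\,,
\]
again modulo boundary terms.

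The idea is to choose specialisations of \( (x,y) \) in these two identities, for instance \( (1,\half) \) and \( (\half,\half) \) in the stuffle form and \( (\half,\half) \) and \( (1,0) \) in the shuffle form, so that a linear combination isolates \( \sum 2^{-j}\zeta(i,j) \). Other weightings such as \( \sum 2^{-i}\zeta(i,j) \), \( \sum\zeta(i,j) \) and \( \sum 2^{j}\zeta(i,j) \) appear along the way and must cancel. One can also feed in the known sum formula \( \sum\zeta(i,j)=\zeta(N) \) and the Ohno--Zudilin weighted sum formula \( \sum 2^{j}\zeta(i,j) = (N+1)\zeta(N) \). The shape of the target, namely \( (2-2^i)(2-2^j)\zeta(i)\zeta(j) \) with prefactor \( 2^{1-N} \), strongly suggests that products \( \zeta(a)\zeta(b) \) are weighted by expressions in \( 2^{-a},2^{-b} \), which is exactly what these specialisations produce.

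The main obstacle is the bookkeeping of the depth-one and divergent boundary terms. These are the \( i=1 \) term \( \zeta(1,N-1) \) in \( T_N \), and the terms with \( a=1 \) or \( b=1 \) in the product generating functions. I would handle them by working throughout with shuffle-regularised double zetas as in \autoref{sec:reg}, or by simply treating \( \zeta(1,N-1) \) separately via Euler's formula
\[
\zeta(1,N-1)=\tfrac{N-1}{2}\zeta(N)-\tfrac12\sum_{a=2}^{N-2}\zeta(a)\zeta(N-a)\,.
\]
These contributions feed into the coefficient \( 2^{-1-n}+2n+2 \) of \( \zeta(n+4) \). After solving the resulting linear system for \( T_N \), substituting into the displayed expression for \( \zeta^\half(2,\{1\}^n,2) \) and collecting terms should give the stated formula. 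As a consistency check, I would verify the case \( n=0 \) against \( \zeta^\half(2,2)=\zeta(2,2)+\tfrac12\zeta(4) \).
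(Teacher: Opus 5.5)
Your proposal is correct and is essentially the paper's argument. After setting \( r=\half \) in \autoref{prop:zr:2112}, the paper uses only the stuffle reflection \( \zeta(i,j)=\zeta(i)\zeta(j)-\zeta(j,i)-\zeta(N) \) for \( i,j\geq 2 \), which is your \( (x,y)=(1,\half) \) stuffle specialisation. Modulo products and \( \zeta(N) \), this turns \( \sum 2^{-j}\zeta(i,j) \) into \( 2^{-N} \) times the Ohno--Zudilin sum \( \sum 2^{j}\zeta(i,j)=(N+1)\zeta(N) \), up to a \( \zeta(1,N-1) \) term that Euler's formula then removes.

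The other specialisations you list are only needed if you want to re-derive the Ohno--Zudilin and sum formulas from double shuffle rather than cite them. Your minus sign in Euler's formula is the correct one; the paper's displayed version has a sign slip, though its final formula is right.
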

	  Finally, we also derive a \( {}_4F_3 \) hypergeometric expression for the generating series of double zeta values in \autoref{prop:dzv:hyper}.  This could be used to study MZV evaluations via double zeta values, using automatic summation, creative telescoping and WZ methods as in \cite{AuCtel}.
	
	\subsection*{Acknowledgements} I am grateful to Max Planck Institute for Mathematics in
	Bonn for support and hospitality, during the earliest stages of this work.  I would like to thank Nobuo Sato and Wadim Zudilin for useful discussions on  hypergeometric series, which helped to further extend some results, and  Michael Hoffman for frequent discussions on the multiple zeta values and related topics.
	
	\section{Review of quasi-shuffle algebras, and regularisation}\label{sec:reg}

	Here we recall the details of the algebraic framework for describing the structure of multiple zeta values, and the two regularisation constructions for multiple zeta values of non-admissible indices.
	
	\subsection*{Stuffle product} Multiple zeta values satisfy a product structure from multiplying the series representation in \autoref{eqn:def:mzv}.  This is called the \emph{stuffle product} or \emph{harmonic product}.  For example
	\begin{align*}
	&\zeta(k_1) \zeta(\ell_1,\ell_2) \, = \, \sum_{0 < m_1} \sum_{0 < n_1 < n_2} \frac{1}{m_1^{k_1}} \cdot \frac{1}{n_1^{\ell_1} n_1^{\ell_2}} \\
	& = \bigg( \sum_{0 < m_1 < n_1 < n_2} + \sum_{0 <  n_1 < m_1 < n_2} + \sum_{ 0 < n_1 < n_2 < m_1} + \sum_{0 < n_1 = m_1 < n_2} + \sum_{0 < n_1 < m_1 = n_2} \bigg) \frac{1}{m_1^{k_1}} \cdot \frac{1}{n_1^{\ell_1} n_1^{\ell_2}}  \\[1ex]
	& = \zeta(k_1,\ell_1,\ell_2) + \zeta(\ell_1,k_1,\ell_2) + \zeta(\ell_1,\ell_2,k_1) + \zeta(k_1 + \ell_1, \ell_2) + \zeta(\ell_1, k_1 + \ell_2) \,.
	\end{align*}
	The product is computed by interleaving the summation indices in all possible ways, and explicitly allowing equality between summation indices from the different series.  \medskip
	
	\subsection*{Integral representation, and shuffle product}  Multiple zeta values can also be written as iterated integrals.  Namely
	\[
		\zeta(k_1,\ldots,k_d) = I(0; \underbrace{1,0,\ldots,0}_{k_1}, \underbrace{1,0,\ldots,0}_{k_2},\ldots,\underbrace{1,0,\ldots,0}_{k_d}; 1) \,,
	\]
	where
	\[
		I(a; \eps_1, \eps_2, \ldots, \eps_w; b) = \int_{a < t_1 < \cdots < t_w < b} \omega_{\eps_1}(t_1) \omega_{\eps_2}(t_2) \cdots \omega_{\eps_w}(t_w)  \,,
	\]
	with \( \eps_1(t) = \frac{\dd t}{1-t} \) and \( \eps_0(t) = \frac{\dd t}{t} \).  The integral representation makes clear that multiple zeta values satisfy a \emph{duality}: under the change of variables \( t_i \mapsto 1-t_i \), namely exchanging \( 0 \leftrightarrow 1 \) and reversing the string \( \eps_1,\ldots,\eps_w \), one obtains the integral representation of a different MZV which must have the same numerical value.  For example \( \zeta(1,2,4) = \zeta(1,1,2,3) \).
	
	Multiple zeta values inherit a second product structure from multiplication of the integral representation.  This is called the \emph{shuffle product}.   For example:
	\begin{align*}
		\zeta(2) \zeta(2) &= \int_{0 < t_1 < t_2 < 1} \frac{\dd t_1}{1-t_1} \frac{\dd t_2}{t_2} \int_{0 < s_1 < s_2 < 1} \frac{\dd s_1}{1-s_1} \frac{\dd s_2}{s_2} \\[1ex]
		&= 4 \zeta(1,3) + 2 \zeta(2,2) \,,
	\end{align*}
	by considering how \( 0 < t_1 < t_2 < 1 \) and \( 0 < s_1 < s_2 < 1 \) can interleave.  Here the equality \( s_i = t_j \) defines a measure 0 set, so does not contribute to the resulting integral. \medskip
	
	\subsection*{Algebraic framework, shuffle and stuffle product} We describe these two products algebraically using the setup of \cite{HoffmanAlgebra97} and \cite{IKZ06}.  Let \( \mathfrak{H} = \mathbb{Q}\langle x, y \rangle \) be the non-commutative polynomial algebra on two indeterminates, and set \( \mathfrak{H}^0 \coloneqq \mathbb{Q} + y \mathfrak{H} x \).  Let \( z_k \coloneqq y x^{k-1} \); note every element of \( \mathfrak{H}^0 \) can be expressed as a concatenation of suitable \( z_{k_i} \)'s, the last of which is not \( z_1 \).  Define the map
	\begin{align*}
		\zeta \colon \mathfrak{H}^0 & \to \mathbb{R} \\
		z_{k_1} z_{k_2} \cdots z_{k_d} &= \zeta(k_1,k_2,\ldots,k_n) \,.
	\end{align*}
	
	Write \( \mathfrak{H}^1 \coloneqq \mathbb{Q} + y \mathfrak{H}\), and denote by \( \mathbbm{1} \) the empty word.  For any words \( w, w_1, w_2 \in \mathfrak{H}^1 \), define the stuffle product via
	\begin{align*}
		&\left\{ \begin{aligned}
		\quad z_{k_1} w_1 \ast z_{k_2} w_2 &= z_{k_1} (w_1 \ast z_{k_2} w_2) + z_{k_2} (z_{k_1} w_1 \ast w_2) + z_{k_1+k_2} (w_1 \ast w_2) \,; \\
		w \ast \one &= \one \ast w = w\,,
		\end{aligned} \right. 
		\intertext{and extended by linearity.  Likewise for \( u_1, u_2 \in \{ x, y \} \), and words \( w, w_1, w_2 \in \mathfrak{H} \) define the shuffle product via}
		&\left\{ \begin{aligned}
		\quad u_1 w_1 \shuffle u_2 w_2 &= u_1 (w_1 \shuffle u_2 w_2) + u_2 (u_1 w_1 \shuffle w_2) \,. \\
		w \shuffle \one & = \one \shuffle w = w \,,
		\end{aligned} \right.
	\end{align*}
	and extended by linearity.  Write \( \mathfrak{H}^0_\bullet \coloneqq  (\mathfrak{H}^0, \bullet) \) for \( \bullet \in \{ \ast, \shuffle \} \), by \cite{HoffmanAlgebra97}, \cite{Reutenauer93} these are algebras.  For \( w_1, w_2 \in \mathfrak{H}^0 \), we have, by the discussion above, that
	\[
		\zeta(w_1 \ast w_2) = \zeta(w_1) \zeta(w_2) = \zeta(w_1 \shuffle w_2) \,,
	\]
	showing that \( \zeta \) is an algebra morphism from \( \mathfrak{H}^0_\ast \coloneqq (\mathfrak{H}^0, \ast) \), resp. \( \mathfrak{H}^0_\shuffle \coloneqq (\mathfrak{H}^0, \shuffle) \), to \( \mathbb{R} \).

	\subsection*{Stuffle and shuffle regularisation}
	
	Write \( \mathfrak{H}^1_\bullet \coloneqq  (\mathfrak{H}^1, \bullet) \) for \( \bullet \in \{ \ast, \shuffle \} \).  By \cite{HoffmanAlgebra97}, \cite{Reutenauer93} one has
	\[
		\mathfrak{H}_\bullet \cong \mathfrak{H}^0_\bullet[y] \,.
	\]
	One then defines \cite[Proposition 1]{IKZ06} morphisms
	\[
		\reg_{\ast,T} \zeta \colon \mathfrak{H}^1_\ast \to \mathbb{R}[T] \qquad \text{and} \qquad
		\reg_{\shuffle,T} \zeta \colon \mathfrak{H}^1 \to \mathbb{R}[T] \,,
	\]
	by extending the map \( \zeta \), and sending \( y \) to \( T \).  These are called the \emph{stuffle regularisation} and \emph{shuffle regularisation}, respectively.  Moreover, \( \mathfrak{H}_\shuffle \coloneqq (\mathfrak{H}, \shuffle) \cong \mathfrak{H}^1_\shuffle[x] \), so one can further define
	\[
		\reg_{\shuffle,T,S} \zeta \colon \mathfrak{H}_\shuffle \to \mathbb{R}[T,S]		
	\]
	by sending \( y \) to \( T \) and \( x \) to \( S \).  The duality is preserved by imposing \( T = S \), and we simply write \( \reg_{\shuffle,T} \colon \mathfrak{H}_\shuffle \to \mathbb{R}[T] \) in this case.
	
	One of the main result of \cite{IKZ06} is a comparison map to relate the shuffle and stuffle regularisations, which are in general different.  Write
	\[
		A(u) = e^{\gamma u} \Gamma(1+u) = \exp\Bigg(\sum_{n=2}^\infty \frac{(-1)^n}{n} \zeta(n) u^n\Bigg) \,,
	\]
	and define the \( \mathbb{R} \)-linear map \( \rho \colon \mathbb{R}[T] \to \mathbb{R}[T] \) by \[
		\rho(e^{Tu}) = A(u) e^{Tu} \,,
	\]
	Then \cite[Thm. 1]{IKZ06} establishes that for any \( w \in \mathfrak{H}^1 \),
	\( \reg_{\shuffle,T} \zeta(w) = \rho( \reg_{\ast,T} \zeta(w)) \,. \)  
	
	By abuse of notation we may write \( \reg_{\bullet,T} \zeta(k_1,\ldots,k_d) \), \( \bullet \in \{ \ast, \shuffle \} \) to mean \( \reg_{\bullet,T} \zeta(z_{k_1} \cdots z_{k_d}) \).  For any word \( w \) we write \( \reg_{\shuffle,T} \zeta_{k_0}(w) \) to denote \( \reg_{\shuffle,T} \zeta(x^{k_0} w) \) with \( k_0 \) initial \( x \)'s.  By abuse of notation we may also write \( \reg_{\shuffle,T} \zeta_{k_0}(k_1,\ldots,k_d) \) for this.  By induction one can show the following explicit formula (see I2 in \cite{BrownMTM12}), for computing the shuffle regularisation at \( T = 0 \),
	\[
	\reg_{\shuffle,0} \zeta_{k_0}(k_1,\ldots,k_d) \coloneqq (-1)^{k_0} \sum_{\substack{i_1 + \cdots + i_d = k_0 \\ i_1,\ldots,j_d\geq 0}} \tbinom{k_1 + i_1 - 1}{i_1} \cdots \tbinom{k_d + i_d - 1}{i_d} \reg_{\shuffle,0} \zeta(k_1 + i_1, \ldots, k_d + i_d) \,,
	\]
	We must retain \( \reg_{\shuffle,0} \) on the right-hand side, in the case \( k_d = 1 \).

	\subsection*{Index shuffle}  Finally, it is convenient to introduce the so-called \emph{index shuffle} \( \indsh \).  This is another product on \( \mathfrak{H}^1 \), defined recursively by
	\[
	\left\{ \begin{aligned}
	\quad z_{k_1} w_1 \indsh z_{k_2} w_2 &= z_{k_1} (w_1 \indsh z_{k_2} w_2) + z_{k_2} (z_{k_1} w_1 \indsh w_2)  \,; \\
	w \ast \one &= \one \ast w = w\,,
	\end{aligned} \right. 
	\]
	and extended by linearity.  Unlike the stuffle product, it does not generate any lower-depth terms as the third summand \( z_{k_1 + k_2} \) is missing.\medskip
	
	Note that \( \zeta \colon (\mathfrak{H}^1, \indsh) \to \mathbb{R} \) is explicitly \emph{not a morphism}.  For example 
	\begin{align*}
	 & \zeta(z_{k_1} \indsh z_{k_2}) - \zeta(z_{k_1})\zeta(z_{k_2})  = \zeta(k_1+k_2) \neq 0 \,.
	 \end{align*}
	 However, by the symmetric sum theorem \cite[\S2]{Hoffman92} the result \(
		\zeta(w_1 \indsh  w_2) \in \mathbb{Q}[\zeta(n) \mid n \geq 2] \,.
	\)  is a polynomial in Riemann zeta values.
	We are so not interested in the algebraic structure of properties of \( \indsh \); we simply use it as a convenient notation in which to write the main result.

	\section{Main theorem and proof}
	
	Here we give the precise statement of the main theorem, and prove it.  Let \( \vec{k} = (k_1,\ldots,k_d) \in \mathbb{Z}_{>0}^d \) be an index.  
	Recall the 1-variable multiple polylogarithm,
	\[
		\Li_{k_1,\ldots,k_d}(t) \coloneqq \sum_{0 < n_1 < n_2 < \cdots < n_d} \frac{t^{n_d}}{n_1^{k_1} n_2^{k_2} \cdots n_d^{k_d}} \,.
	\]
	Given another index \( \vec{\ell} \), write \( \vec{k}\vec{\ell} \) to denote the concatenation of indices, and define
	\begin{align*}
		\Psi(\vec{\ell} \mid x, y, z; t) &= \sum_{\substack{\vec{k} \in I(w, d, h)}} \Li_{\vec{k}\vec{l}}(t) x^{w-d-h} y^{d-h} z^h = \Li_{\vec{\ell}}(t) + \Li_{1,\vec{\ell}}(t) y + \Li_{2,\vec{\ell}}(t) z + \Li_{1,1,\vec{\ell}}(t) y^2 + \cdots \\
		\Psi_0(\vec{\ell} \mid x, y, z; t) &= \sum_{\substack{\vec{k} \in I_0(w, d, h)}} \Li_{\vec{k}\vec{l}}(t) x^w y^d z^{h-1} = \Li_{2,\vec{\ell}}(t)  + \Li_{3,\vec{\ell}}(t) x + \Li_{1,2,\vec{\ell}}(t) y + \cdots \,.
	\end{align*}
	In particular the series \( \Phi \), and \( \Phi_0 \) from Ohno-Zagier \cite{OhnoZagier01} are given by
	\begin{align*}
	\Phi(x, y, z; t) & \coloneqq \Psi(\emptyset \mid x, y, z; t) \,, \quad 
	\Phi_0(x, y, z; t) \coloneqq \Psi_0(\emptyset \mid x, y, z; t) \,.
	\end{align*}
	For \( \vec{\ell} = (\ell_1,\ldots,\ell_r) \) admissible, the main result is an expression for \( \Psi(\vec{\ell} \mid x, y, z; 1) \).
	\begin{Thm}\label{thm:phiell:gs}
		For any admissible index \( \vec{\ell} = (\ell_1,\ldots,\ell_r) \), with \( \ell_r > 1 \), the following generating series expression holds
		\[
		 \Phi(\vec{\ell} \mid x, y, z; 1) = \sum_{a,b,c=0}^\infty \reg_{\shuffle,0}
		 \zeta((z_1^a \indsh z_2^b) ( z_{\ell_1} - x z_{1+\ell_1}) (z_1^c \ast z_{\ell_2 }\cdots z_{\ell_r} ) )
		   (y-x)^a (z - x y)^b (-x)^c \,,
		\]
		where \( \indsh \) is the index shuffle, and \( \ast \) is the stuffle-product.
	\end{Thm}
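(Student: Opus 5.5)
We follow the Ohno--Zagier strategy: derive a differential equation in \( t \) for \( \Psi(\vec{\ell} \mid x,y,z;t) \), solve it explicitly, and set \( t=1 \). The statement writes \( \Phi(\vec{\ell}\mid\cdot) \); we read this as \( \Psi(\vec{\ell}\mid x,y,z;1) \). Recall the derivative rules
\[
\frac{\dd}{\dd t}\Li_{k_1,\ldots,k_d}(t)=\frac1t\Li_{k_1,\ldots,k_d-1}(t) \quad (k_d>1), \qquad
\frac{\dd}{\dd t}\Li_{k_1,\ldots,k_{d-1},1}(t)=\frac{1}{1-t}\Li_{k_1,\ldots,k_{d-1}}(t).
\]
Since \( \ell_r>1 \), differentiating \( \Li_{\vec{k}\vec{\ell}} \) only affects \( \ell_r \). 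So we first peel off \( \ell_2,\ldots,\ell_r \) by iterated integration. Concretely, we write \( \Li_{\vec{k}\vec{\ell}}(t) \) as an iterated integral from \( 0 \) to \( t \) of the word \( y x^{\ell_1-1}\cdots y x^{\ell_r-1} \), preceded by the word of \( \vec{k} \). The \( \vec{k} \)-dependent part is then \( \Psi(\ell_1\mid x,y,z;s) \), evaluated at an inner point \( s \) and integrated against the fixed tail \( z_{\ell_2}\cdots z_{\ell_r} \).

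The first key step is to handle the single-letter case \( \vec{\ell}=(\ell_1) \) with \( \vec{k} \) arbitrary. Here we reproduce the Ohno--Zagier computation. The generating series \( \Phi(x,y,z;t) \) over non-admissible and admissible \( \vec{k} \) satisfies the second-order hypergeometric ODE
\[
t(1-t)\Phi''+\bigl((1-t)(1-x)-ty\bigr)\Phi'+(xy-z)\Phi=\text{(inhomogeneous term)}.
\]
It is solved by \( {}_2F_1(\alpha-x,\beta-x;1-x;t) \)-type functions, where \( \alpha\beta=z \) and \( \alpha+\beta=x+y \). The quantity \( \sum_{\vec{k}}\Li_{\vec{k},\ell_1}(t) \) is obtained from \( \Phi \) by appending \( z_{\ell_1} \), i.e.\ by applying \( \int \frac{\dd t}{1-t} \) and then \( \ell_1-1 \) times \( \int\frac{\dd t}{t} \). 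The terms with \( \vec{k}=\vec{k}'\,1 \) versus \( \vec{k}=\vec{k}'\,(m\geq2) \) are what produce the combination \( z_{\ell_1}-xz_{1+\ell_1} \).

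Rather than using the closed form, we expand \( \Phi \) in its hypergeometric series in \( (y-x) \), \( (z-xy) \) and \( x \). Ohno--Zagier's substitution shows
\[
\Phi(x,y,z;t)\text{-type coefficients}=\sum_{a,b}\Li\bigl(z_1^a\indsh z_2^b\bigr)(t)\,(y-x)^a(z-xy)^b
\]
at the level of iterated integrals in \( 0 \), with an extra factor handling \( x \). The index shuffle appears because \( (y-x) \) marks letters \( z_1 \) and \( (z-xy) \) marks letters \( z_2 \), with the \( x^{k-2} \) excess absorbed via regularisation.

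The second step handles the remaining powers \( (-x)^c \). Each extra \( x \) corresponds to an extra \( \int\frac{\dd t}{t} \) inserted at the start, i.e.\ a leading \( x \) in the word, which is where \( \reg_{\shuffle,0}\zeta_{k_0} \) enters. Moving these \( x \)'s past the tail \( z_{\ell_2}\cdots z_{\ell_r} \) is done with the shuffle/duality. Under duality, leading \( x \)'s become trailing \( y \)'s, and the identity
\[
\sum_c (-x)^c\, x^c w \ \leftrightarrow\ \sum_c (-x)^c\, (z_1^c\ast w)
\]
comes from the standard lemma that \( \reg_{\shuffle} \) of \( w\,y^c \) relates to \( z_1^c \) stuffled into \( w \). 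This is essentially the IKZ comparison specialised to \( T=0 \), and we use \( \rho \) only to identify the boundary terms at \( t=1 \).

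The main obstacle is this last interchange. One must show that the \( x \)-twisting of the inner \( \Psi(\ell_1\mid s) \) passes through the outer iterated integrals of \( \ell_2,\ldots,\ell_r \) exactly as \( z_1^c\ast z_{\ell_2}\cdots z_{\ell_r} \), with the boundary contribution at \( t=1 \) giving the regularisation. We would first verify the case \( r=2 \), \( c=1 \) by hand. We would then prove the general case by induction on \( c \), using the recursive definition of \( \ast \) and integration by parts in the last variable.
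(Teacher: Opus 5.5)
There is a genuine gap. Your setup agrees with the paper's:

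\begin{itemize}
\item you read the left side as \( \Psi(\vec{\ell}\mid x,y,z;1) \);
\item you peel off \( \ell_2,\ldots,\ell_r \) by iterated integration;
\item you rewrite the Ohno--Zagier \( {}_2F_1 \) coefficients as \( \prod_{p<m}\bigl(1+\frac{y-x}{p}+\frac{z-xy}{p^2}\bigr)/\prod_{q\le m}\bigl(1-\frac{x}{q}\bigr) \), so that \( y-x \) and \( z-xy \) mark \( z_1 \) and \( z_2 \) through \( \indsh \).
\end{itemize}

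But the content of the theorem lies in what you call the ``extra factor handling \( x \)'' and ``the main obstacle''. There your proposal has no working mechanism, already for \( r=1 \). What one actually obtains is
\[
\Psi(\vec{\ell}\mid x,y,z;1)=\sum_{m\ge1}\frac{\prod_{p<m}\bigl(1+\frac{y-x}{p}+\frac{z-xy}{p^2}\bigr)}{\prod_{q\le m}\bigl(1-\frac{x}{q}\bigr)}\Bigl(1-\frac{x}{m}\Bigr)\sum_{m<i_2<\cdots<i_r}\frac{1}{m^{\ell_1}i_2^{\ell_2}\cdots i_r^{\ell_r}}\,.
\]
Here the \( x \)-dependence is a star-sum over \( q\le m \), i.e.\ it is attached to the head of the index. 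It is not an ``extra \( \int\dd t/t \) at the start''. Nor can it be isolated inside \( \Psi(\ell_1\mid s) \) at an interior point \( s \) and then pushed through the outer integrals; a leading \( \dd t/t \) diverges at \( 0 \) in any case.

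The missing idea is to truncate the tail at \( i_r\le N \) and write
\[
\prod_{q\le m}\Bigl(1-\frac{x}{q}\Bigr)^{-1}=\prod_{m<q\le N}\Bigl(1-\frac{x}{q}\Bigr)\cdot\prod_{q\le N}\Bigl(1-\frac{x}{q}\Bigr)^{-1}\,.
\]
The first factor, together with \( 1-\frac{x}{m} \), gives exactly \( (z_{\ell_1}-xz_{1+\ell_1})(z_1^c\ast z_{\ell_2}\cdots z_{\ell_r}) \). The stuffle appears because the \( q\in(m,N] \) are summation variables independent of \( i_2<\cdots<i_r \), so they interleave with them with equality allowed. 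The second factor is the divergent series \( \sum_i\zeta^\star_{\le N}(\{1\}^i)x^i \).

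One must then still prove
\[
\sum_{i=0}^c\zeta_{\le N}\bigl(z_{\vec{a}}(z_1^{c-i}\ast z_{\vec{b}})\bigr)(-1)^i\zeta^\star_{\le N}(\{1\}^i)\longrightarrow\reg_{\shuffle,0}\zeta\bigl(z_{\vec{a}}(z_1^c\ast z_{\vec{b}})\bigr)\,.
\]
Your induction through the recursive definition of \( \ast \) resembles the paper's inductive step, which is on \( c+\ddp(\vec{b}) \). However, you have no base case in which the regularisation is actually produced. For \( \vec{b}=\emptyset \) the paper does the following:
\begin{itemize}
\item it uses \( z_{\vec{a}}(1-z_1X)^{-1}=\reg_{\ast,0}\bigl(z_{\vec{a}}(1-z_1X)^{-1}\bigr)\ast\exp_\ast(z_1X) \) together with \( \sum_n\Sigma(z_1^n)(-X)^n=\exp_\ast\bigl(\sum_i z_i(-X)^i/i\bigr) \) to cancel all \( z_1 \)-dependence;
\item it then uses \( \reg_{\ast,T}\zeta(\vec{a},\{1\}^c)=\sum_i\reg_{\shuffle,0}\zeta(\vec{a},\{1\}^{c-i})\reg_{\ast,T}\zeta(\{1\}^i) \) to pass to shuffle regularisation.
\end{itemize}

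The ``standard lemma'' you invoke instead, relating trailing \( y^c \) (dually, leading \( x^c \)) to \( z_1^c \) stuffled into \( w \), is not available. Shuffle-regularising trailing \( y \)'s gives \( (-1)^c(w'\shuffle y^c)x \) for \( w=w'x \), which is a shuffle, not a stuffle. Moreover, the IKZ comparison shows \( \reg_{\shuffle,0}\zeta(z_1^c\ast w)=0 \) for every convergent \( w \) and \( c\ge1 \). For example, \( \reg_{\shuffle,0}\zeta(z_1\ast z_2)=\zeta(1,2)+\reg_{\shuffle,0}\zeta(2,1)+\zeta(3)=\zeta(3)-2\zeta(3)+\zeta(3)=0 \), while \( \reg_{\shuffle,0}\zeta_1(2)=-2\zeta(3) \). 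The stuffle in the theorem is into the suffix \( z_{\ell_2}\cdots z_{\ell_r} \) only, and it comes from index interleaving after truncation, not from regularisation.
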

It could be interesting to further extend this to allow regularisation and non-admissible \( \vec{\ell} \).  First we give some useful corollaries, the proof of \autoref{thm:phiell:gs} is presented afterwards.

\begin{Cor}\label{cor:wtsum:ell}
	Fix the weight \( w \), depth \( d \) and height-parameter \( \eta \).  For a fixed string \( \vec{\ell} = (\ell_1,\ldots,\ell_r) \), \( \ell_r \geq 2 \), the following weighted sum evaluation holds
	\[
	\sum_{\vec{k} \in I(w, d)} \binom{\wwt(\vec{k})}{\eta} \zeta(\vec{k}\vec{\ell}) 
	= \begin{aligned}[t]
	& \sum_{a + c = w - 2 \eta} 
	(-1)^{w + d + \eta} \binom{a}{d-\eta} \reg_{\shuffle,0}\zeta( (z_1^a \mathbin{\indsh} z_2^\eta) z_{\ell_1} (z_1^c \ast z_{\ell_2} \cdots z_{\ell_r}) ) \\
	& { + }\sum_{a + c = w -1 - 2 \eta} 
	(-1)^{w + d + \eta} \binom{a}{d-\eta} \reg_{\shuffle,0} \zeta( (z_1^a \mathbin{\indsh} z_2^\eta) z_{1+\ell_1} (z_1^c \ast z_{\ell_2} \cdots z_{\ell_r}) )
	\,.
	\end{aligned}
	\]
	
	\begin{proof}
		This follows directly from \autoref{thm:phiell:gs}, by extracting the coefficient of \( x^w y^d z^{\eta} \) in
		\[
			\phi(\vec{\ell} \mid x, x y, x^2 y z; 1) \,. \qedhere
		\]
	\end{proof}
\end{Cor}

\begin{Cor}
	The sum 
\[
\sum_{\vec{k} \in I(w, d)} \binom{\wwt(\vec{k})}{\eta} \zeta(\vec{k}\vec{\ell}) 
\]
is expressible via MZV's of depth \( \leq \eta + \wwt(\vec{\ell}) \).
	
	\begin{proof}
	Under duality, weight and depth pair \( (w,d) \) transforms to \( (w, w-d) \).  Note also, every term in \( z_{a_1} \cdots z_{a_r} \ast z_{b_1} \cdots z_{b_s} \) has depth \( \geq \max(r,s) \) as \( a_i \)'s never combine with themselves, nor \( b_j \)'s with themselves.  So after computing the index shuffle \( \indsh \) and stuffle \(\ast \) products in \autoref{cor:wtsum:ell}, we have MZV's of depth at least \( a + \eta + 1 + \max(c, r-1)  \geq a + \eta + c + 1 \), and weight \( a + 2 \eta + c + \wwt(\vec{\ell}) \), resp. \( a + 2 \eta + c + 1 + \wwt(\vec{\ell}) \).  Under duality, we obtain MZV's with depth \( \leq (a + 2 \eta + c + 1 + \wwt(\vec{\ell})) - (a + \eta + c + 1) = \eta + \wwt(\vec{\ell}) \), as claimed.
\end{proof}
\end{Cor}

\begin{proof}[Proof of \autoref{thm:phiell:gs}]
Recall the differential property of \( \Li_{k_1,\ldots,k_n}(t) \), 
\[
\frac{\dd}{\dd t} \Li_{k_1,\ldots,k_{n-1},k_n}(t) = \begin{cases}
(1-t)^{-1} \Li_{k_1,\ldots,k_{n-1}}(t) & \text{$k_n = 1$}, \\
t^{-1} \Li_{k_1,\ldots,k_{n-1},k_n-1}(t) & \text{$k_n \geq 1$} \,.
\end{cases}
\]
This is also valid for \( n = 1 \), with the convention that \( \Li_\emptyset(t) = 1 \).  Write \( D_1 = (1-t) \frac{\dd}{\dd t} \), and \( D_0 = t \frac{\dd}{\dd t} \), and take 
\[
 \mathcal{L} \coloneqq (D_0^{\ell_1-1}) (D_1 D_0^{\ell_2-1})  \cdots (D_1 D_0^{\ell_n-1}) \,; \]
 note the lack of \( D_1 \) in the first bracket.  Then we have
\begin{align*}\label{eqn:Lpsi}
\mathcal{L} \Psi(\vec{\ell} \mid x, y, z; t) = \sum_{\substack{\vec{k} \in I(w, d, h)}} \Li_{\vec{k},1}(t) x^{w-d-h} y^{d-h} z^h 
& = \frac{1}{y} \Big( \Phi(x,y,z;t) -1 - z \Phi_0(x,y,z;t) \Big)
\end{align*}

Ohno and Zagier \cite{OhnoZagier01} note that the following system of differential equations hold
\[
	\frac{\dd \Phi_0}{\dd t} = \frac{1}{y t} ( \Phi - 1 - z \Phi_0) + \frac{x}{t} \Phi_0 \,, \quad
	\frac{\dd}{\dd t} \big( \Phi - z \Phi_0 \big) = \frac{y}{1-t} \Phi \,.
\]
By eliminating \( \Phi \), they establish that \( \Phi_0 \) satisfies the second order differential equation
\[
	t(1-t) \frac{\dd^2 \Phi_0}{\dd t^2} + \big( (1-x)(1-y) - y t \big) \frac{\dd \Phi_0}{\dd t} + (x y - z) \Phi_0 = 1 \,.
\]
Finally they give a hypergeometric expression for \( \Phi_0 \) since it is the unique solution of the differential equation, which vanishes at \( t = 0 \).  
Recall the \( {}_2F_1 \) hypergeometric function is defined by
\[
\pFq{2}{1}{a,b}{c}{t} \coloneqq \sum_{m=0}^\infty \frac{\poch{a}{m} \poch{b}{m}}{\poch{c}{m}} \frac{t^m}{m!} \,, 
\]
where \( \poch{a}{m} \coloneqq a(a+1) \cdots(a+m-1)  \) is the Pochhammer symbol.  In particular Ohno and Zagier \cite{OhnoZagier01} find
\begin{equation}\label{eqn:phi0_solution}
\Phi_0(x,y,z;t) = \frac{1}{x y - z} \Big( 1 - \pFq{2}{1}{\alpha-x,\beta-x}{1-x}{t} \Big) \,,
\end{equation}
for \( \alpha + \beta = x+ y, \alpha \beta = z \).  
Hence we have
\begin{align*}
	\mathcal{L} \Psi(\vec{\ell} \mid x, y, z; t)
	& = \frac{1}{y} \Big( \Phi(x,y,z;t) -1 - z \Phi_0(x,y,z;t) \Big)  \\
	& =  \Big(t  \frac{\dd}{\dd t} - x \Big) \frac{1}{x y - z} \bigg( 1 - \pFq{2}{1}{\alpha-x,\beta-x}{1-x}{t} \bigg) \\
	& =   \frac{-1}{x y - z}  \sum_{m=1}^\infty \frac{\poch{\alpha-x}{m} \poch{\beta-x}{m}}{\poch{1-x}{m}} \Big( 1  - \frac{x}{m} \Big) \frac{t^m}{(m-1)!} \,.
\end{align*}
Note that
\begin{align*}
	\frac{-1}{x y - z} \frac{\poch{\alpha-x}{m} \poch{\beta-x}{m}}{\poch{1-x}{m}} \frac{1}{(m-1)!} 
	& = \frac{\prod_{p < m} \big( 1 + \frac{\alpha-x}{p} \big) \big( 1 + \frac{\beta-x}{p} \big)}{\prod_{q \leq m} \big( 1 - \frac{x}{q} \big) } \frac{1}{m} \\
	& = \frac{\prod_{p < m} \big( 1 + \frac{y-x}{p} + \frac{z-xy}{p^2} \big)}{\prod_{q \leq m} \big( 1 - \frac{x}{q} \big) } \frac{1}{m} \,,
\end{align*}
so
\[
\mathcal{L} \Psi(\vec{\ell} \mid x, y, z; t)
= \sum_{m=1}^\infty \frac{\prod_{p < m} \big( 1 + \frac{y-x}{p} + \frac{z-xy}{p^2} \big)}{\prod_{q \leq m} \big( 1 - \frac{x}{q} \big) } \Big( 1  - \frac{x}{m} \Big) \frac{t^m}{m} \,.
\]
Note that \( \Psi(\vec{\ell} \mid x, y, z; t) \) and all intermediate derivatives through to \( \mathcal{L}\Psi(\vec{\ell} \mid x, y, z; t) \) vanish at \( t = 0 \).  So to construct \( \Psi(\vec{\ell} \mid x, y, z; t) \) from the above expression, we apply the following operator
\[
	\mathcal{L}^{-1} \coloneqq (I_0^{\ell_n-1} I_1) \cdots (I_0^{\ell_2-1}I_1) (I_0^{\ell_1-1}) \,.
\]
where
\[
	I_0 \coloneqq \int_0^t \frac{\dd}{\dd t} \,, \quad \text{and} \quad  I_1 \coloneqq \int_0^t \frac{\dd t}{1-t} \,.
\]
By direct calculation
\begin{align*}
	I_0^{\ell_1-1} t^m &= \frac{t^m}{m^{\ell_1-1}} \,, \quad
	I_0^{\ell_j-1} I_1 t^m = 
	 \sum_{m < i} \frac{t^i}{i^{\ell_j}} \,.
\end{align*}
So applying \( \mathcal{L}^{-1} \) and taking \( t = 1 \) gives
\[
	\Psi(\ell \mid x, y, z; 1) = \sum_{m=1}^\infty \frac{\prod_{p < m} \big( 1 + \frac{y-x}{p} + \frac{z-xy}{p^2} \big)}{\prod_{q \leq m} \big( 1 - \frac{x}{q} \big) } \Big( 1  - \frac{x}{m} \Big) \sum_{\substack{m < i_2 < i_3 < \cdots \\ \cdots < i_{r-1} < i_r}} \frac{1}{m^{\ell_1} i_2^{\ell_2} \cdots i_r^{\ell_r}} \,.
\]
To extract a generating series expression form this, we first consider the truncated version
\[
	S_N \coloneqq  \sum_{m=1}^\infty \prod_{p < m} \Big( 1 + \frac{y-x}{p} + \frac{z-xy}{p^2} \Big) \cdot \frac{\prod_{m < q \leq N} \big( 1 - \frac{x}{q} \big) }{\prod_{q \leq N} \big( 1 - \frac{x}{q} \big) }  \Big( 1  - \frac{x}{m} \Big)  \sum_{\substack{m < i_2 < i_3 < \cdots \\ \cdots < i_{r-1} < i_r \leq N}} \frac{1}{m^{\ell_1} i_2^{\ell_2} \cdots i_r^{\ell_r}} \,,
\]
so that \( \Psi(\vec{\ell} \mid x, y, z; 1) = \lim_{N \to \infty} S_N \).  (Note the sum over \( m \) is actually restricted to \( m \leq N \).)  Recall the truncated multiple zeta (star) values are defined by restricting the summation indices in \eqref{eqn:def:mzv} (resp. \eqref{eqn:def:mzsv}) to a finite range, 
\begin{align*}
	\zeta_{\leq N}(\vec{k}) = \zeta_{\leq N}(k_1, k_2, \ldots, k_d) \coloneqq \sum_{0 < n_1 < n_2 < \cdots < n_d \leq N } \frac{1}{n_1^{k_1} n_2^{k_2} \cdots n_d^{k_d}} \,, \\
	\zeta_{\leq N}^\star(\vec{k}) = \zeta_{\leq N}^\star(k_1, k_2, \ldots, k_d) \coloneqq \sum_{0 < n_1 \leq n_2 \leq \cdots \leq n_d \leq N} \frac{1}{n_1^{k_1} n_2^{k_2} \cdots n_d^{k_d}} \,.
\end{align*}
(To avoid confusion, we have written \( \zeta_{\leq N} \) to denote the truncated MZV, as \( \reg_{\shuffle,0} \zeta_k \) denotes the shuffle-regularised MZV with \( k \) leading \( 0 \)'s in the integral representation.)  We note
\[
	\prod_{q \leq N} \Big( 1 - \frac{x}{q} \Big)^{-1} = \sum_{i=0}^\infty \zeta_{\leq N}^\star(\{1\}^i) x^i \,,
\]
compare for example \cite[Eqn. (44)]{zagier2232}.  Hence
\begin{align*}
	S_N = {} & \sum_{i=0}^\infty \zeta_{\leq N}^\star(\{1\}^i) x^i \\[-1ex]
	& \quad \cdot  \sum_{m=1}^\infty \prod_{p < m} \Big( 1 + \frac{y-x}{p} + \frac{z-xy}{p^2} \Big) \cdot \prod_{m < q \leq N} \Big( 1 - \frac{x}{q} \Big)   \sum_{\substack{m < i_2 < i_3 < \cdots \\ \mathclap{\cdots < i_{r-1} < i_r \leq N}}}  \Big( \frac{1}{m^{\ell_1}}  - \frac{x}{m^{\ell_1+1}} \Big) \frac{1}{i_2^{\ell_2} \cdots i_r^{\ell_r}} \,.
\end{align*}
Extracting the coefficient of \( (y-x)^a (z-xy)^b (-x)^c \) in the summation indexed by \( m \) gives
\begin{align*}
S_N = {} & \sum_{i=0}^\infty \zeta_{\leq N}^\star(\{1\}^i) x^i \cdot  \sum_{a,b,c=1}^\infty \zeta_{\leq N}((z_1^a \indsh z_2^b) (z_{\ell_1} - x z_{\ell_1+1}) (z_1^c \ast z_{\ell_2} \cdots z_{\ell_r})) (y-x)^a (z - xy)^b (-x)^c  \,.
\end{align*}
{\bf Explanation:} Compare again \cite[Eqn. (11)]{zagier2232}. For \( (y-x)^a (z-xy)^b \) we select either \( \frac{1}{p} \) or \( \frac{1}{p^2} \) so that there are \( a \)-many \( \frac{1}{p} \)'s (giving index \( z_1 \)) and \( b \)-many \( \frac{1}{p^2} \)'s (giving index \( z_2 \)) in total.  These can occur in any order, but summation indices are not equal, giving the index shuffle \( z_1^a \indsh z_2^b \).  For \( (-x)^c \), we have \( c \)-many terms \( \frac{1}{q} \) (giving index \( 1 \)), which must be interleaved with \( \frac{1}{i_2^{\ell_2}} \cdots \frac{1}{i_r^{\ell_r}} \).  The summation indices for \( z_1^c \) and \( z_{\ell_2} \cdots z_{\ell_r} \) are interleaved with equality allowed, giving the stuffle product \( z_1^c \ast z_{\ell_2} \cdots z_{\ell_r}\vphantom{\frac{1}{p}} \).  The index \( z_{\ell_1} \) (resp. \( z_{\ell_1+1} \) with additional coefficient \( -x\vphantom{\frac{1}{p}} \)) occurs between these two different products. As the largest summation index \( i_r \) is bounded above by \( N \), the result is the truncated MZV \( \zeta_{\leq N} \).\medskip

	Then expanding out the product of the \( \zeta_{\leq N}^\star\) generating series and the \( \zeta_N \) generating series gives
\begin{align*}
S_N = \sum_{a,b,c=1}^\infty \bigg( \sum_{i=0}^c \begin{aligned}[t] \zeta_{\leq N}((z_1^a \indsh z_2^b) (z_{\ell_1} - x z_{\ell_1+1}) (z_1^{c-i} \ast z_{\ell_2} \cdots z_{\ell_r}))  \cdot (-1)^i \zeta_{\leq N}^\star(\{1\}^i) \bigg) & \\ {} \cdot  (y-x)^a (z - xy)^b & (-x)^c \,.\end{aligned}
\end{align*}
The claim now is that
\begin{align}
	& \lim_{N\to\infty} \sum_{i=0}^c \zeta_{\leq N}((z_1^a \indsh z_2^b) (z_{\ell_1} - x z_{\ell_1+1}) (z_1^{c-i} \ast z_{\ell_2} \cdots z_{\ell_r}))  \cdot (-1)^i \zeta_{\leq N}^\star(\{1\}^i) \label{eqn:limit} \\
	& = \reg_{\shuffle,0} \zeta((z_1^a \indsh z_2^b) (z_{\ell_1} - x z_{\ell_1+1}) (z_1^{c-i} \ast z_{\ell_2} \cdots z_{\ell_r})) \notag \,.
\end{align}
The following lemma shows that this claim holds, by linearity.

\begin{Lem}\label{lem:limit}
	Let \( \vec{a} \) and \( \vec{b} \) be indices.  If the concatenation \( \vec{a}\vec{b}\) is admissible, then
	\[
		\lim_{N\to\infty} \sum_{i=0}^c \zeta_{\leq N}(z_{\vec{a}} (z_1^{c-i} \ast z_{\vec{b}}))  \cdot (-1)^i \zeta_{\leq N}^\star(\{1\}^i) = \reg_{\shuffle,0} \zeta(z_{\vec{a}} (z_1^{c-i} \ast z_{\vec{b}})) \,.
	\]
	Here for an index \( \vec{m} = (m_1,\ldots,m_d) \), the symbol \( z_{\vec{m}} \) denotes \( z_{m_1} \cdots z_{m_d} \).
\end{Lem}

Applying \autoref{lem:limit} to the limit in \autoref{eqn:limit}: after expanding linearly, every term in the limit is of the form \[ 
\sum_{i=1}^c \zeta_{\leq N}\big(z_{k_1} \cdots z_{k_s} z_{\ell_1 (+1)} \, (z_1^{c-i} \ast z_{\ell_2} \cdots z_{\ell_r})\big) \cdot (-1)^i \zeta_{\leq N}(\{ 1\}^{i}) \,,
\] with \( k_i \in \{ 1, 2 \} \).  In particular, for \( \vec{a} = (k_1, \ldots, k_s, \ell_1) \) (resp. \( \vec{a} = (k_1, \ldots, k_s, \ell_1+1) \)), and \( \vec{b} = (\ell_2, \ldots, \ell_r) \), the either \( r > 1 \) and \( \vec{a}\vec{b} \) ends in \( \ell_r > 1 \) by hypothesis of the \autoref{thm:phiell:gs}.  Otherwise \( r = 1 \) so \( \vec{b} = \emptyset \), and \( \vec{a}\vec{b} \) ends in \( \ell_1 > 1 \) (resp. \( \ell_1 + 1 > 2 \)) since \( \vec{\ell} = (\ell_1) \) is still admissible by hypothesis.\medskip

The limit of each individual term in \autoref{eqn:limit} can hence be computed by \autoref{lem:limit}.  After recombining with linearity in reverse, the we obtain the equality claimed in \autoref{lem:limit}.  With this \autoref{thm:phiell:gs} is proven.
\end{proof}

It remains to prove \autoref{lem:limit}.  We first treat the case \( \vec{b} = 0 \), with the following two stand alone lemmas.

\begin{Lem}\label{lem:trtost}
	For \( \vec{a} = (a_1,\ldots,a_d) \) admissible, the following limit holds, independent of the choice of regularisation parameter \( \reg_{\ast,T} \zeta(1) = T \):
	\[
		\lim_{N\to\infty} \sum_{i=0}^c \zeta_{\leq N}(\vec{a}, \{1\}^{c-i}) \cdot (-1)^i \zeta_{\leq N}^\star(\{1\}^i) = 
		\sum_{i=0}^c \reg_{\ast,T} \big( \zeta(\vec{a}, \{1\}^{c-i}) \cdot (-1)^i \zeta^\star(\{1\}^i) \big) \,.
	\]
	
	\begin{proof}
		We first establish a generating series regularisation identity, to see that the sum does not depend on \( \zeta_{\leq N}(1) \).  From \cite[Corollary 5, proof]{IKZ06} (note the reversed convention), we have the generating series identity
		\[
			z_{\vec{a}} \frac{1}{1 - z_1X} = \reg_{\ast,0}\Big(z_{\vec{a}} \frac{1}{1 - z_1X}\Big) \ast \exp_\ast(z_1 X) \,.
		\]
		Recall the ``starification''-map \( \Sigma \) from \cite[\S3.1]{HoffmanIhara17}, so that \( \zeta^\star = \zeta \circ \Sigma \) and \( \zeta_{\leq N}^\star = \zeta_{\leq N} \circ \Sigma \).  Applying \( \Sigma \) to both sides of \cite[Eqn. (34)]{HoffmanIhara17}, in the case \( k=1 \) gives
		\[
			\sum_{n=0}^\infty \Sigma(z_1^n) X^n = \exp_\ast\Big(\sum_{i=1}^\infty \frac{z_i X^i}{i} \Big) \,,
		\]
		as \( \Sigma \exp_\star = \exp_\ast \Sigma \) (see the discussion around Eqn. (25) in \cite{HoffmanIhara17}).  We therefore see that
		\begin{align*}
			z_{\vec{a}} \frac{1}{1 - z_1X} \ast \sum_{n=0}^\infty \Sigma(z_1^n) (-X)^n 
			& = \reg_{\ast,0}\Big(z_{\vec{a}} \frac{1}{1 - z_1X}\Big) \ast \exp_\ast(z_1 X) \ast \exp_\ast\Big(\sum_{i=1}^\infty \frac{z_i (-X)^i}{i} \Big) \\
			& = \reg_{\ast,0}\Big(z_{\vec{a}} \frac{1}{1 - z_1X}\Big) \ast \exp_\ast\Big(\sum_{i=2}^\infty \frac{z_i (-X)^i}{i} \Big) \,,
		\end{align*}
		 so the dependence on \( z_1 \) drops out.  In particular
		 \[
		 z_{\vec{a}} \frac{1}{1 - z_1X} \ast \sum_{n=0}^\infty \Sigma(z_1^n) (-X)^n 
		 = \reg_{\ast, T} \Big( z_{\vec{a}} \frac{1}{1 - z_1X} \ast \sum_{n=0}^\infty \Sigma(z_1^n) (-X)^n  \Big) \,.
		 \]
		 Applying \( \lim_{N\to\infty} \zeta_{\leq N} \) to the coefficient of \( X^c \) in this identity gives the claim, since \( \lim_{N\to\infty} \zeta_{\leq N} = \zeta \) on convergent words.
	\end{proof}
\end{Lem}

\begin{Lem}\label{lem:sttosh}
	For \( \vec{a} = (a_1,\ldots,a_d) \) admissible, the following relation between the stuffle-regularisation and shuffle-regularisation holds,
	\[
		\sum_{i=0}^c \reg_{\ast,T} \big( \zeta(\vec{a}, \{1\}^{c-i}) \cdot (-1)^i \zeta^\star(\{1\}^i) \big) = \reg_{\shuffle,0} \zeta(\vec{a}, \{1\}^c) \,.
	\]
	
	\begin{proof}
		It was shown in \cite[Lemma 2.9]{charltont2212} (and is likely well-known elsewhere) that
		\[
			\reg_{\ast,T} \zeta(\vec{a}, \{1\}^c) = \sum_{i=0}^c \reg_{\shuffle,0} \zeta(\vec{a} ,\{1\}^{c-i}) \cdot \reg_{\ast,T} \zeta(\{1\}^i) \,.
		\]
		Forming the generating series \( \sum_{c=0}^\infty \bullet X^c \), and multiplying both sides by (see \cite[\S6.1]{HoffmanIhara17})
		\[
			\sum_{i=0}^\infty \reg_{\ast,T} \zeta^\star(\{1\}^i) (-X)^j = \exp\bigg( \sum_{i=1}^\infty  \frac{\reg_{\ast,T} \zeta(i) (-X)^i}{i} \bigg) = \bigg( \sum_{i=0}^\infty \reg_{\ast,T} \zeta(\{1\}^i) X^i \bigg)^{-1} \,,
		\]
		gives the result.
	\end{proof}
\end{Lem}
	
	\begin{proof}[Proof of \autoref{lem:limit}]
		This is by induction on the parameter \( c + \ddp(\vec{b}) \).  When \( c = 0 \), statement reduces to
		\[
			\lim_{N\to\infty} \zeta_{\leq N}(z_{\vec{a}} z_{\vec{b}}) = \zeta(z_{\vec{a}} z_{\vec{b}}) = \reg_{\shuffle,0} \zeta(z_{\vec{a}} z_{\vec{b}}) \,,
		\]
		which holds as \( \vec{a}\vec{b} \) is admissible.  Likewise, if \( \ddp(\vec{b}) = 0 \), meaning \( \vec{b} = \emptyset \), then the statement reduces to
		\[
		\lim_{N\to\infty} \sum_{i=0}^c \zeta_{\leq N}(z_{\vec{a}} z_1^{c-i})  \cdot (-1)^i \zeta_{\leq N}^\star(\{1\}^i) = \reg_{\shuffle,0} \zeta(z_{\vec{a}} z_1^{c-i}) \,,
		\]
		which holds via \autoref{lem:trtost} and \autoref{lem:sttosh}, as \( \vec{a}\vec{b} = \vec{a}\) is admissible. This deals with the base cases.  \medskip
		
		Now suppose the claim holds whenever \( c'  + \ddp(\vec{b}') < c + \ddp(\vec{b}) \).  Since \( \vec{b} = (b_1,\ldots,b_d) \neq \emptyset \), write \( \vec{b}' = (b_2,\ldots,b_d) \).  Then for \( c-i > 0 \), we have by the recursive definition of the \( \ast \) product, 
				\[
			z_{\vec{a}} (z_1^{c-i} \ast z_{\vec{b}}) = z_{\vec{a}} z_{b_1} (z_1^{c-i} \ast z_{\vec{b}'})  + z_{\vec{a}} z_1 (z_1^{c-i-1} \ast z_{\vec{b}}) + z_{\vec{a}} z_{b_1+1} (z_1^{c-i-1} \ast z_{\vec{b}'}) \,.
 		\]
 		We can then write
		\begin{align*}
			& \sum_{i=0}^c \zeta_{\leq N}(z_{\vec{a}} (z_1^{c-i} \ast z_{\vec{b}}))  \cdot (-1)^i \zeta_{\leq N}^\star(\{1\}^i) \\
			& = \zeta_{\leq N}(z_{\vec{a}} (z_1^{0} \ast z_{\vec{b}}))  \cdot (-1)^c \zeta_{\leq N}^\star(\{1\}^c)+ \sum_{i=0}^{c-1} \zeta_{\leq N}(z_{\vec{a}} (z_1^{c-i} \ast z_{\vec{b}}))  \cdot (-1)^i \zeta_{\leq N}^\star(\{1\}^i) \\
			& = \begin{aligned}[t] 
				\zeta_{\leq N}(z_{\vec{a}} (z_1^{0} \ast z_{\vec{b}})) & \cdot (-1)^c \zeta_{\leq N}^\star(\{1\}^c)+ \sum_{i=0}^{c-1} \Big ( \zeta_{\leq N}(z_{\vec{a}} z_{b_1} (z_1^{c-i} \ast z_{\vec{b}'}))  \\
				& + \zeta_{\leq N}(z_{\vec{a}} z_1 (z_1^{c-i-1} \ast z_{\vec{b}})) +  \zeta_{\leq N}(z_{\vec{a}} z_{b_1+1} (z_1^{c-i-1} \ast z_{\vec{b}'})) \Big) \cdot (-1)^i \zeta_{\leq N}^\star(\{1\}^i)  \\
			\end{aligned} 
		\end{align*}
		Since \( z_{\vec{a}} (z_1^{0} \ast z_{\vec{b}})) = z_{\vec{a}} z_{b_1} (z_1^0 \ast z_{\vec{b}'}) \), we can repackage the first line into a single sum, giving
		\begin{align*}
		& = \begin{aligned}[t] 
		& \sum_{i=0}^{c} \Big ( \zeta_{\leq N}(z_{\vec{a}} z_{b_1} (z_1^{c-i} \ast z_{\vec{b}'})) \cdot (-1)^i \zeta_{\leq N}^\star(\{1\}^i) \\
		& + \sum_{i=0}^{c-1} \Big( \zeta_{\leq N}(z_{\vec{a}} z_1 (z_1^{c-i-1} \ast z_{\vec{b}})) +  \zeta_{\leq N}(z_{\vec{a}} z_{b_1+1} (z_1^{c-i-1} \ast z_{\vec{b}'})) \Big) \cdot (-1)^i \zeta_{\leq N}^\star(\{1\}^i)  \\
		\end{aligned} 
		\end{align*}
		The first sum has induction parameter \( c + \ddp(\vec{b}') \) and  \( (\vec{a},b_1)\vec{b}' = \vec{a}\vec{b} \) is admissible.  The second has induction parameter \( c-1 + \ddp(\vec{b}) \) and \( (\vec{a},1)\vec{b} \) is admissible.   The third has induction parameter \( c-1 + \ddp(\vec{b}') \), and \( (\vec{a},b_1+1)\vec{b}' \) is admissible as it either ends in \( b_1 + 1 > 1 \) (when \( \vec{b}' = \emptyset \)) or in \( b_d > 1 \).  The induction hypothesis therefore allows us to compute the limit, giving
		\begin{align*}
		 \xrightarrow{N\to\infty} {} & \reg_{\shuffle,0} \big( \zeta(z_{\vec{a}} z_{b_1} (z_1^{c} \ast z_{\vec{b}'})) + \zeta(z_{\vec{a}} z_1 (z_1^{c-1} \ast z_{\vec{b}})) + \zeta(z_{\vec{a}} z_{b_1+1} (z_1^{c-1} \ast z_{\vec{b}'})) \big) \\
		 {} = {} & \reg_{\shuffle,0} \zeta(z_{\vec{a}} ( z_1^c \ast z_{\vec{b}})) \,.
		\end{align*}
		The final line is obtained by applying the recursive definition of the \( \ast \)-product.  This has established the inductive step, so the proof is complete.
\end{proof}
	
	\section{Applications to MZV evaluations}
	We end with some applications to the evaluations of multiple zeta values or their generating series.
	
	\subsection{\texorpdfstring{Case \( \zeta^r(2,\{1\}^n,2) \)}
		{Case zeta\textasciicircum{}r(2,\{1\}\textasciicircum{}n,2)}}\label{sec:zh:2112}
	
	To evaluate \( \zeta^r(2, \{1\}^{m-4}, 2) \), we first note that
	\begin{align*}
		\zeta^r(2,\{1\}^{m-4},2) &\coloneqq \sum_{\substack{(k_1,\ldots,k_d) \in I_0(m,d) \\ k_1 \geq 2, d \geq 1}} r^{m-2-d} \zeta(k_1,\ldots,k_d) \\
		& = \sum_{\substack{(k_1,\ldots,k_d) \in I_0(m,d) \\ k_d = 2, d \geq 1}} r^{d-2} \zeta(k_1,\ldots,k_d) \,,
	\end{align*}
	using duality.  By \autoref{cor:wtsum:ell}, with \( \vec{\ell} = (2) \) and \( \eta = 0 \), we have, after applying duality that
	\begin{align*}
	& \sum_{\substack{(k_1,\ldots,k_d) \in I_0(w,d) \\ k_d = 2}} \zeta(k_1,\ldots,k_d) \\
	& = (-1)^{w+d-1} \reg_{\shuffle,0} \bigg\{ \sum_{i+j=w-2} \tbinom{i}{d-1} \zeta_j(i+2) + \sum_{i+j=w-3} \tbinom{i}{d-1} \zeta_j(1,i+2) \bigg\}
	\end{align*}
	Forming the generating series, we have
	\begin{align*}
	& \sum_{n=4}^\infty \zeta^r(2,\{1\}^{n-4},2) x^{n-1}  \\
	& = \sum_{w=4}^\infty \sum_{d=1}^{w-1} (-1)^{w+d-1} r^{d-2} \reg_{\shuffle,0} \Bigg\{ \sum_{i+j=w-2} \tbinom{i}{d-1} \zeta_j(i+2) + \sum_{i+j=w-3} \tbinom{i}{d-1} \zeta_j(1,i+2) \Bigg\} \\
	& = -\zeta(2)\frac{x}{r} - \zeta(3)x^2  + \sum_{i,j=0}^\infty \sum_{d=1}^\infty (-r)^{d-2} \tbinom{i}{d-1}  \reg_{\shuffle,0} \Bigg\{ 
	  \zeta_j(i+2) (-x)^{i+j+1}  +  \zeta_j(1,i+2) (-x)^{i+j+2} 
	\Bigg\} \,. 
	\end{align*}
	Here, the terms of weight 2 and 3 arise by extending the sum to include \( w = 2, 3 \), and removing the extra terms manually.  Now summing over \( d \), gives
	\begin{equation}\label{eqn:zr2112:gs}
	= -\frac{\zeta(2)x}{r} - \zeta(3)x^2  - \sum_{i,j=0}^\infty\frac{(1-r)^i}{r}  \reg_{\shuffle,0} \Bigg\{ 
	\zeta_j(i+2) (-x)^{i+j+1} + \zeta_j(1,i+2) (-x)^{i+j+2} \Bigg\} \,.
	\end{equation}
	Using the regularisation formulas
	\begin{align*}
		\reg_{\shuffle,0} \zeta_c(a) &= (-1)^c \sum_{i_1=c} \tbinom{i_1+a-1}{i_1} \reg_{\shuffle,0} \zeta(a+i_1)  \,, \\
		\reg_{\shuffle,0} \zeta_c(a,b) &= (-1)^c \sum_{i_1+i_2=c} \tbinom{i_1+a-1}{i_1} \tbinom{i_1+b-1}{i_2} \reg_{\shuffle,0} \zeta(a+i_1,b+i_2)  \,,
	\end{align*}
	we have by a direction calculation, the following generating series identities
	\begin{align*}
	\sum_{b=0}^\infty  \reg_{\shuffle,0} \zeta_c(b+2) z^{b+1} w^{c} &= \sum_{b=1}^\infty \zeta(b+1) \big\{ (z - w)^b - (- w)^b \big\} \,. \\
	\sum_{b,c=0}^\infty  \reg_{\shuffle,0} \zeta_c(a+1,b+2) y^{a} z^{b+1} w^{c} &= \sum_{a=0}^\infty \sum_{b=1}^\infty  \zeta(a+1,b+1) (y - w)^a \big\{ (z-w)^b -  (-w)^b \} \,,
	\end{align*}
	(since the \( b = 0 \) summand is identically 0).  Applying these to \autoref{eqn:zr2112:gs}, with \( y = 0, z = -(1-r)x, w = -x \), we find that
	\begin{align*}
	 \sum_{n=0}^\infty \zeta^r(2,\{1\}^{n-4},2) x^{n-1}  
	=  -\frac{\zeta(2)x}{r} - \zeta(3)x^2  & {} - \sum_{b=1}^\infty \frac{\zeta(b+1)}{(1-r)r} \big\{ (rx)^{b} - x^b \big\}
	\\
	& \qquad + \sum_{a=0}^\infty \sum_{b=1}^\infty  \frac{\zeta(a+1,b+1)}{(1-r)r} x^{a+1} \big\{ (rx)^b - x^b \big\} \,.
	\end{align*}
	Using the sum formula \( \sum_{i=1}^{n-2} \zeta(i, n-i) = \zeta(n) \), one can cancel the \( x^b \) terms in weight \( {\geq}3 \) in series above, giving
	\begin{align*}
	\sum_{n=0}^\infty \zeta^r(2,\{1\}^{n-4},2) x^{n-1}  
	=  \frac{\zeta(2)x}{1-r} - \zeta(3)x^2  & {} - \sum_{b=1}^\infty \frac{\zeta(b+1)}{(1-r)r} (rx)^{b}  + \sum_{a=0}^\infty \sum_{b=1}^\infty  \frac{\zeta(a+1,b+1)}{(1-r)r} x^{a+1} (rx)^b \,.
	\end{align*}
	Then extracting the coefficient of \( x^{n+3} \), leads to the following evaluation for \( \zeta^r(2, \{1\}^n, 2) \) in terms of double zeta values; in odd weight the parity theorem \cite[\S2.6]{GoncharovMultiple01}, \cite{PanzerParity16} reduces this to a polynomial in Riemann zeta values.
	\begin{Prop}[Evaluation of \( \zeta^r(2,\{1\}^n,2) \)]\label{prop:zr:2112}
		The following evaluation holds for all \( n \geq 0  \),
		\[
		\zeta^r(2,\{1\}^n,2) = \frac{1}{(1-r)r^2} \sum_{\substack{i+j = n+4 \\ i \geq 1,j \geq 2}} r^j \zeta(i,j) - \frac{1}{(1-r)r^2} \cdot r^{n+4} \zeta(n+4) \,.
		\]
		In the case \( n = 2m + 1\) odd, for \( m \geq 0 \), we have the following evaluation in terms of Riemann zeta values,
		\[
		\zeta^r(2,\{1\}^{2m+1},2) = \begin{aligned}[t] & \bigg( \frac{m+2}{r(1-r)} - \frac{1 - (1-r)^{2m+4}}{2r^2} - \frac{1 - r^{2m+4}}{2(1-r)^2} \bigg) \zeta(2m+5) \\
		& - \frac{1}{r(1-r)} \sum_{\substack
			{2s + k = 2m + 5 \\ s \geq 1, k \geq 2}} (1 - r^{2s-1})(1 - (1-r)^{k-1}) \zeta(2s)\zeta(k) \,.
		\end{aligned}
		\]
	\end{Prop}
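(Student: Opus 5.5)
The first formula is read off from the generating series just obtained, and the odd case follows by feeding in the explicit double-zeta evaluation in odd weight.

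\textbf{Step 1: the double zeta formula.} We extract the coefficient of \( x^{n+3} \) in the simplified generating series for \( \sum_n \zeta^r(2,\{1\}^{n-4},2)x^{n-1} \), which gives \( \zeta^r(2,\{1\}^n,2) \).

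For \( n \geq 0 \) the weight is \( n+4 \geq 4 \). The two correction terms \( \frac{\zeta(2)x}{1-r} \) and \( -\zeta(3)x^2 \) live only in weights \( 2 \) and \( 3 \), so they do not contribute.

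The single zeta series contributes at \( b = n+3 \), giving \( -\frac{r^{n+3}}{(1-r)r}\zeta(n+4) = -\frac{r^{n+4}}{(1-r)r^2}\zeta(n+4) \).

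The double zeta series contributes all pairs with \( a+1+b = n+3 \), \( a \geq 0 \), \( b \geq 1 \). Setting \( i = a+1 \geq 1 \) and \( j = b+1 \geq 2 \), so that \( i+j = n+4 \), the coefficient is \( \frac{r^{j-1}}{(1-r)r} = \frac{r^j}{(1-r)r^2} \). This is exactly the first claimed formula.

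The one thing to double check is the index bookkeeping: the coefficient of \( x^{n+3} \) must correspond to \( \zeta^r(2,\{1\}^{n},2) \). This follows from the exponent \( x^{N-1} \) attached to \( \zeta^r(2,\{1\}^{N-4},2) \), taking \( N = n+4 \).

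\textbf{Step 2: the odd case.} Take \( n = 2m+1 \), so the weight \( w = 2m+5 \) is odd. We substitute the classical explicit formula for double zeta values of odd weight (Euler, or Borwein--Borwein--Girgensohn). For \( i+j = w \) odd, \( i \geq 1 \), \( j \geq 2 \),
\[
\zeta(i,j) = (-1)^{j}\sum_{2s+k=w}\Big[\tbinom{k-1}{i-1}+\tbinom{k-1}{j-1}\Big]\zeta(2s)\zeta(k) + \text{(explicit multiple of } \zeta(w)\text{)} ,
\]
written here in the paper's convention \( \zeta(i,j) = \sum_{n_1<n_2} n_1^{-i}n_2^{-j} \). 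A convenient way to organise this is through the generating function of \( \zeta(i,j) \) with \( \zeta(w) \) and \( \zeta(2s)\zeta(k) \) terms.

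We then sum against the weights \( r^j \) for \( j = 2,\dots,w-1 \), which leaves two finite sums to evaluate:

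\emph{The \( \zeta(2s)\zeta(k) \) terms.} The weighted sums \( \sum_j (-1)^j r^j\binom{k-1}{j-1} \) and \( \sum_j (-1)^j r^j \binom{k-1}{w-j-1} \) are binomial sums. They collapse to expressions like \( r(1-r)^{k-1} \) and \( r^{w-k}\cdot(\dots) \), up to boundary corrections coming from the restriction \( j \geq 2 \), \( i \geq 1 \). These should combine into \( -r(1-r)(1-r^{2s-1})(1-(1-r)^{k-1})\cdot\frac{1}{r^2(1-r)^2}\cdots \). Dividing by \( (1-r)r^2 \) should then produce the displayed coefficient \( -\frac{1}{r(1-r)}(1-r^{2s-1})(1-(1-r)^{k-1}) \).

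\emph{The \( \zeta(w) \) terms.} These, together with the \( -\frac{r^{w}}{(1-r)r^2}\zeta(w) \) from Step 1, are geometric sums in \( r \) and \( (1-r) \). They should simplify to \( \frac{m+2}{r(1-r)} - \frac{1-(1-r)^{w-1}}{2r^2} - \frac{1-r^{w-1}}{2(1-r)^2} \); note \( w-1 = 2m+4 \).

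\textbf{Main obstacle.} The hard part is Step 2: handling the boundary terms where the binomial sums are truncated, in particular the \( k = 2 \) and \( s \) extremal cases and the \( i = 1 \) term, so that everything reassembles into the clean symmetric form. I would do this by working entirely with generating functions. I would start from the Gangl--Kaneko--Zagier style generating series for odd-weight double zetas and substitute \( y \mapsto rx \) directly, rather than summing term by term.

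As a sanity check I would test \( m = 0 \) at \( r = 0 \) and \( r \to 1 \) against \( \zeta(2,1,2) \) and \( \zeta^\star(2,1,2) \).
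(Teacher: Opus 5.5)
\textbf{Verdict: genuine gap in Step~2.} Your Step~1 is correct and is exactly the paper's argument: the coefficient of $x^{n+3}$ in the simplified generating series. For the odd case the paper only appeals to the parity theorem, so making this explicit with Euler's odd-weight double zeta formula is the right route. However, the formula you state is wrong, and with it the ``should combine'' step cannot succeed. In the paper's convention ($i$ is the exponent on the \emph{smaller} summation index), for $i+j=w$ odd and $j\geq 2$,
\[
\zeta(i,j) = (-1)^{i}\sum_{\substack{2s+k=w\\ s\geq 1,\ k\geq 3}}\Big[\tbinom{k-1}{i-1}+\tbinom{k-1}{j-1}\Big]\zeta(2s)\zeta(k) + \delta\,\zeta(i)\zeta(j) + \Big(\tfrac{(-1)^{i+1}}{2}\tbinom{w}{i}-\tfrac12\Big)\zeta(w) \,,
\]
where $\delta=1$ if $j$ is even and $i\geq 2$, and $\delta=0$ otherwise. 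Your sign $(-1)^j$ is the opposite one, and the product term $\delta\,\zeta(i)\zeta(j)$ is missing. For instance, in the paper's convention $\zeta(3,2)=\frac92\zeta(5)-2\zeta(2)\zeta(3)$, whereas your formula produces $+3\zeta(2)\zeta(3)$. Inserting your version into the first formula at $m=0$ gives $(3-3r+r^2)/(1-r)$ as the coefficient of $\zeta(2)\zeta(3)$. That is not even a polynomial in $r$, whereas the correct value is $r-2$; your proposed sanity check at $r=0$, $r\to 1$ would have caught this.

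\textbf{How the computation closes with the correct formula.} There are no delicate boundary cases: $k$ is odd, so $k=2$ never occurs, and $i=1$ needs no special treatment. For fixed $2s+k=w$, the binomial sums give
\[
\sum_{j=2}^{w-1}(-1)^{w-j}r^j\Big[\tbinom{k-1}{w-j-1}+\tbinom{k-1}{j-1}\Big] = r(1-r)^{k-1}-r-r^{2s}(1-r)^{k-1}\,,
\]
where the $-r$ reflects the absent $j=1$ term. The product term, coming from $(i,j)=(k,2s)$, adds $r^{2s}$. Only then does everything factor as $-r(1-r^{2s-1})(1-(1-r)^{k-1})$. Dividing once by $(1-r)r^2$ gives the stated coefficient $-\frac{1}{r(1-r)}(1-r^{2s-1})(1-(1-r)^{k-1})$; your sketch effectively divides twice. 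So the product term is the genuinely missing ingredient, not a boundary correction.

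Similarly, the $\zeta(w)$ contributions, including the $-r^w\zeta(w)$ from Step~1, total
\[
\tfrac12\big[(1-r)^w-1+wr+r^w\big]-\frac{r^2-r^w}{2(1-r)}-r^w \,.
\]
Dividing this by $(1-r)r^2$ gives exactly $\frac{m+2}{r(1-r)}-\frac{1-(1-r)^{2m+4}}{2r^2}-\frac{1-r^{2m+4}}{2(1-r)^2}$.
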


	In the special case where \( r = \half \), we recall the weighted sum formula from \cite[Thm. 3]{OhnoZudilinZetaStars} (there with the opposite convention),
	\[
		\sum_{i=2}^{n-1} 2^{i} \zeta(n-i,i) = (n+1) \zeta(n) \,.
	\]
	Using that 
	\[
		\zeta(1,n-1) = \frac{n-1}{2} \zeta(n) + \frac{1}{2} \sum_{i=2}^{n-2} \zeta(i)\zeta(n-i) \,,
	\]
	via the generating series in \cite[Eqn. 10]{BBBkfold}, and otherwise rewriting \( \zeta(n-i,i) = \zeta(n-i)\zeta(i) - \zeta(i,n-i) - \zeta(n) \), for \( 2 \leq i \leq n-2 \), it follows that 
	\[
		\sum_{i=1}^{n-2} 2^{-(n-i)} \zeta(i,n-i)  = \frac{1}{4} \sum_{i=2}^{n-2} (1 + 2^{2-n} - 2^{2+i-n}) \zeta(i) \zeta(n-i) + \Big( 2^{1-n} + \frac{n-3}{4} \Big) \zeta(n)  \,.
	\]
	Substituting this into \autoref{prop:zr:2112}, we find that \( \zeta^\half(2, \{1\}^n, 2) \) always evaluates as a polynomial in Riemann zeta values.  In particular, we obtain the following.
	\begin{Prop}[Evaluation of \( \zeta^\half(2,\{1\}^{n},2) \)]\label{prop:zh:2112}
		The following evaluation hold for all \( n \geq 0 \),
		\begin{align*}
		\zeta^\half(2,\{1\}^n,2) = \sum_{\substack{i+j=n+4 \\ i,j \geq 2}} {-}2^{1-i-j}(2 - 2^i)(2 - 2^j) \zeta(i)\zeta(j) + (2^{-1-n} + 2n + 2) \zeta(n+4) \,.
		\end{align*}
	\end{Prop}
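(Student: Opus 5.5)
Substitute \( r = \half \) into \autoref{prop:zr:2112}, which gives
\[
\zeta^\half(2,\{1\}^n,2) = 8 \sum_{\substack{i+j = n+4 \\ i \geq 1,j \geq 2}} 2^{-j} \zeta(i,j) - 8 \cdot 2^{-(n+4)} \zeta(n+4) \,,
\]
since \( \frac{1}{(1-r)r^2} = 8 \) at \( r=\half \). Set \( N = n+4 \). The double zeta sum is exactly \( \sum_{i=1}^{N-2} 2^{-(N-i)} \zeta(i,N-i) \). This is the quantity evaluated just before the statement, using the Ohno--Zudilin weighted sum formula together with the formula for \( \zeta(1,N-1) \) and the stuffle relation \( \zeta(N-i,i) = \zeta(N-i)\zeta(i) - \zeta(i,N-i) - \zeta(N) \). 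I take that displayed evaluation as given. If I wanted to be safe I would re-derive it quickly. The sum \( \sum_{i=2}^{N-1} 2^i \zeta(N-i,i) = (N+1)\zeta(N) \) becomes, after the stuffle rewriting for \( 2 \le i \le N-2 \), an identity for \( \sum 2^{N-i}\zeta(i,N-i) \). Separating off the \( i=1 \) term via \( \zeta(1,N-1) \) and the \( i = N-1 \) boundary term \( 2^{N-1}\zeta(1,N-1) \), and rescaling by \( 2^{-N} \), gives the stated formula.

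Plugging this in gives
\[
\zeta^\half(2,\{1\}^n,2) = 2 \sum_{i=2}^{N-2} \big(1 + 2^{2-N} - 2^{2+i-N}\big)\zeta(i)\zeta(N-i) + \big(2^{4-N} + 2(N-3) - 2^{3-N}\big)\zeta(N) \,.
\]
The \( \zeta(N) \) coefficient is \( 2^{3-N} + 2N - 6 = 2^{-1-n} + 2n + 2 \), as claimed.

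For the product terms I would symmetrise in \( i \leftrightarrow j = N-i \), since \( \zeta(i)\zeta(j) \) is symmetric. The symmetrised coefficient is \( 2 + 2^{3-N} - 2^{1-N}(2^{i+1}+2^{j+1})\cdot\frac12 \cdot 2 \), i.e.\ \( 2 + 2^{3-N} - 2^{2-N}(2^i + 2^j) \). Expanding the target gives \( -2^{1-N}(2-2^i)(2-2^j) = -2^{1-N}(4 - 2^{i+1} - 2^{j+1} + 2^N) = -2^{3-N} + 2^{2-N}(2^i+2^j) - 2 \). This is the negative of what I get, so a sign slip is likely in my bookkeeping of \( \zeta(i,N-i) \) versus \( \zeta(N-i,i) \) orientation in the prior display.

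The main obstacle is precisely this careful bookkeeping of conventions (orientation of the double zetas and the \( r^j \) weights). I would resolve it by checking small cases numerically, e.g.\ \( n=0 \): \( \zeta^\half(2,2) = \zeta(2,2)+\tfrac12\zeta(4) \). Matching this against the formula fixes the signs, and then the symmetrised algebra above finishes the proof.
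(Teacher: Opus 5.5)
Your route is the same as the paper's: set \(r=\half\) in \autoref{prop:zr:2112}, which gives the factor \(8\), and insert the displayed evaluation of \(\sum_{i=1}^{N-2}2^{-(N-i)}\zeta(i,N-i)\). Your \(\zeta(N)\) coefficient \(2^{3-N}+2N-6\) is correct. But the proof does not close. You end with product coefficients that are exactly the negatives of the target's, and ``fixing the signs by matching \(n=0\)'' is not a proof step: one numerical case cannot settle the sign of a whole family of identities without locating the wrong step. Your diagnosis is also off. The substitution into \autoref{prop:zr:2112} and the orientation \(\zeta(i,N-i)\) are both correct; the error sits in the identity you took as given. In this convention Euler's formula reads
\[
\zeta(1,N-1)=\frac{N-1}{2}\zeta(N)-\frac12\sum_{i=2}^{N-2}\zeta(i)\zeta(N-i),
\]
with a minus sign. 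At \(N=4\) this gives \(\zeta(1,3)=\tfrac14\zeta(4)=\tfrac32\zeta(4)-\tfrac12\zeta(2)^2\), whereas the plus sign would give \(\tfrac{11}{4}\zeta(4)\). Consequently the intermediate identity must be
\[
\sum_{i=1}^{N-2}2^{-(N-i)}\zeta(i,N-i)=-\frac14\sum_{i=2}^{N-2}\bigl(1+2^{2-N}-2^{2+i-N}\bigr)\zeta(i)\zeta(N-i)+\Bigl(2^{1-N}+\frac{N-3}{4}\Bigr)\zeta(N).
\]
As a check at \(N=4\), the left side is \(\tfrac18\zeta(1,3)+\tfrac14\zeta(2,2)=\tfrac{7}{32}\zeta(4)\). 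The right side gives \(\tfrac{7}{32}\zeta(4)\) only with the leading minus sign; with a plus sign it gives \(\tfrac{17}{32}\zeta(4)\).

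With the corrected identity, the product part becomes \(-2\sum_{i=2}^{N-2}(1+2^{2-N}-2^{2+i-N})\zeta(i)\zeta(N-i)\). Your symmetrisation then yields the coefficient \(-2-2^{3-N}+2^{2-N}(2^i+2^j)=-2^{1-N}(2-2^i)(2-2^j)\), which is exactly the target, and the \(\zeta(N)\) coefficient is unchanged. So the statement is true and the method works.

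To make it a proof, you must actually carry out the re-derivation you sketch, rather than assert that it ``gives the stated formula''; done correctly, it gives the corrected one. Your sketch also mixes orientations:
\begin{itemize}
\item Reindexing the Ohno--Zudilin formula directly gives \(\sum_{i=1}^{N-2}2^{N-i}\zeta(i,N-i)=(N+1)\zeta(N)\), with no stuffle needed.
\item What you actually need is \(2^N\) times your sum, namely \(\sum_{i=1}^{N-2}2^{i}\zeta(i,N-i)\).
\item For \(2\le i\le N-2\), the relation \(\zeta(i,N-i)=\zeta(i)\zeta(N-i)-\zeta(N-i,i)-\zeta(N)\) turns this into product terms, multiples of \(\zeta(N)\), and \(-\sum_{i=2}^{N-2}2^i\zeta(N-i,i)=-(N+1)\zeta(N)+2^{N-1}\zeta(1,N-1)\).
\item What remains is \((2+2^{N-1})\zeta(1,N-1)\), which must be evaluated with the correctly signed Euler formula.
\end{itemize}
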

	
	\subsection{Hypergeometric generating series of double zeta values}
	
	Recall the generalised hypergeometric function
	\[
	\pFq{p}{q}{a_1,\ldots,a_p}{b_1,\ldots,b_q}{t} \coloneqq \sum_{m=0}^\infty \frac{\poch{a_1}{m} \cdots \poch{a_p}{m}}{\poch{b_1}{m} \cdots \poch{b_q}{m}} \frac{t^m}{m!} \,,
	\]
	where \( \poch{x}{m} = x (x+1) \cdots (x+m-1) \) is the Pochhammer symbol.  From \autoref{lem:limit}, we can obtain a hypergeometric expression for the generating series of double zeta values.  This could be used to study MZV evaluations via double zeta values, using automatic summation, creative telescoping and WZ methods as in \cite{AuCtel}.
	
		\begin{Prop}[Generating series of double zeta values]\label{prop:dzv:hyper}
		The following gives a closed form hypergeometric expression for the generating series double zeta values:
		\begin{align*}
		 \sum_{c,d=0}^\infty \zeta(c+1,d+2) x^{c} y^{d+1} 
		 = \frac{x-y}{x-1} \pFq{4}{3}{1,1,1,1-x+y}{2,2,2-x}{1} - \zeta(2)
		  - \frac{\psi(1-x) + \gamma }{x} \,,
		\end{align*}
		where \( \psi(x) \coloneqq \frac{\mathrm{d}}{\mathrm{d}x} \log\Gamma(x) \) is the digamma function.
		\end{Prop}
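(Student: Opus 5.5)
The plan is to take \autoref{thm:phiell:gs} with \( \vec{\ell} = (2) \) and specialise \( z = xy \), so that \( z - xy = 0 \). Then only the \( b = 0 \) terms survive, and the right-hand side becomes
\[
\sum_{a,c\ge0} \reg_{\shuffle,0}\zeta\big(z_1^a (z_2 - x z_3) z_1^c\big)(y-x)^a(-x)^c .
\]
Words of the form \( z_1^a z_2 z_1^c \) dualise to indices of depth \( \le 2 \), essentially \( \zeta(c+1,a+2) \) up to reindexing. The shuffle-regularisation of the trailing \( z_1^c \) is handled by the explicit formula for \( \reg_{\shuffle,0}\zeta_{k_0} \) after duality. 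So the plan has two halves. First, identify this specialised right-hand side with the double-zeta generating series in the statement, plus explicit single-zeta corrections. Second, independently evaluate the left-hand side \( \Psi((2)\mid x,y,xy;1) \) as a hypergeometric series.

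For the hypergeometric half I would reuse the series for \( \Psi(\vec\ell\mid x,y,z;1) \) derived in the proof of \autoref{thm:phiell:gs}, before any truncation. At \( z=xy \) and \( r=1 \) it reads
\[
\Psi((2)\mid x,y,xy;1)=\sum_{m\ge1}\frac{\prod_{p<m}(1+\frac{y-x}{p})}{\prod_{q\le m}(1-\frac{x}{q})}\Big(1-\frac{x}{m}\Big)\frac{1}{m^2}
=\sum_{m\ge1}\frac{(1-x+y)_{m-1}}{(1-x)_{m-1}}\,\frac{1}{m^2}.
\]
Shifting \( m\mapsto m+1 \) gives \( \sum_{m\ge0}\frac{(1-x+y)_m}{(1-x)_m(m+1)^2} \). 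This is a \( {}_4F_3 \) with upper parameters \( 1,1,1,1-x+y \) and lower parameters \( 2,2,1-x \). To reach the lower parameter \( 2-x \) of the statement, I would use \( (1-x)_m=(1-x)(2-x)_{m-1} \), or equivalently the contiguous relation \( \frac{1}{(1-x)_m}=\frac{1}{1-x}\cdot\frac{m+1-x}{(2-x)_m} \). I would then split \( m+1-x=(m+1)-x \): one piece is the \( {}_4F_3 \) in the statement, and the other is a simpler \( {}_3F_2 \), which I would sum or absorb. I expect the prefactor \( \frac{x-y}{x-1} \) to emerge from this step, after checking against low-order Taylor coefficients in \( x,y \).

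The correction terms \( -\zeta(2) \) and \( -(\psi(1-x)+\gamma)/x \) should be exactly what the regularisation and boundary pieces produce. Two sources need to be tracked. The first is the \( a=0 \) and weight-two terms, together with the \( -xz_3 \) part. The second is the shuffle-regularised tails \( \reg_{\shuffle,0}\zeta(\ldots z_1^c) \), which after duality give single zeta values weighted by powers of \( x \). For the latter I would use \( \sum_{n\ge1}\zeta(n+1)x^{n}=-\gamma-\psi(1-x) \), which accounts for the \( (\psi(1-x)+\gamma)/x \) term. Alternatively, one can bypass the Theorem and expand the left side directly as \( \sum_{0<m<n}\frac{y}{(m-x)\,n(n-y)} \), summing over \( m \) via the digamma function, as a cross-check.

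The main obstacle is the bookkeeping in the first half. Writing \( \reg_{\shuffle,0}\zeta(z_1^a(z_2-xz_3)z_1^c) \) in the variables \( (y-x),(-x) \) as exactly \( \sum\zeta(c+1,d+2)x^cy^{d+1} \) plus single-zeta terms requires care with the duality, with the binomial regularisation formula, and with the change of variables \( (y-x,-x)\to(x,y) \). I would first carry this out at the level of generating series, as in the two displayed \( \reg_{\shuffle,0}\zeta_c \) generating-function identities of \autoref{sec:zh:2112}. I would then confirm that the leftover terms match \( -\zeta(2)-(\psi(1-x)+\gamma)/x \) by comparing coefficients of low total degree.
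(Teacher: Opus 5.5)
Your two-sided evaluation of \(\Psi((2)\mid x,y,xy;1)\) can be completed, but as proposed the pieces are misattributed and the decisive identity is missing.

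The word \(z_1^a z_2 z_1^c\) dualises to \(x^c z_{a+2}\). Hence \(\reg_{\shuffle,0}\zeta(z_1^a z_2 z_1^c)=(-1)^c\binom{a+c+1}{c}\zeta(a+c+2)\) is a single zeta value, not ``essentially \(\zeta(c+1,a+2)\)''. The double zeta values come only from the \(-xz_3\) part, since \(z_1^a z_3 z_1^c\) dualises to \(\zeta_c(1,a+2)\).

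Write \(G\) for the left-hand side of the statement and \(Z=\sum_{n\ge 3}\zeta(n)x^{n-2}=-\zeta(2)-\frac{\psi(1-x)+\gamma}{x}\). The regularisation formula plus the sum formula \(\sum_{i=1}^{n-2}\zeta(i,n-i)=\zeta(n)\) give \(\sum_{a,c}\reg_{\shuffle,0}\zeta_c(1,a+2)(y-x)^{a+1}(-x)^c=G-Z\). Your plan never invokes the sum formula, yet it is what turns the \(x^{c+d+1}\) terms into \(Z\). The first generating series of \autoref{sec:zh:2112} gives \(\sum_{a,c}\reg_{\shuffle,0}\zeta_c(a+2)(y-x)^a(-x)^c=\frac{\psi(1-x)-\psi(1-y)}{y-x}\). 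So the specialised theorem reads
\[
\Psi((2)\mid x,y,xy;1)=\frac{\psi(1-x)-\psi(1-y)}{y-x}-\frac{x}{y-x}\,(G-Z).
\]
Thus \(G\) enters with coefficient \(-x/(y-x)\), not as ``\(G\) plus single-zeta corrections''. There is also a \(\psi(1-y)\) term that the statement does not contain.

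On the hypergeometric side, your contiguous relation gives
\[
\Psi=\frac{1}{1-x}\,{}_3F_2(1,1,1-x+y;2,2-x;1)-\frac{x}{1-x}\,{}_4F_3(1,1,1,1-x+y;2,2,2-x;1).
\]
So the prefactor produced at that step is \(-x/(1-x)\), not \((x-y)/(x-1)\), and the \({}_3F_2\) cannot be ``absorbed''. The missing step is to evaluate it:
\[
\frac{1}{1-x}\,{}_3F_2(1,1,1-x+y;2,2-x;1)=\frac{\psi(1-x)-\psi(1-y)}{y-x},
\]
which follows from \(\sum_{n\ge1}\frac{\poch{\alpha}{n}}{n\poch{\beta}{n}}=\psi(\beta)-\psi(\beta-\alpha)\) with \(\alpha=y-x\), \(\beta=1-x\). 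You then cancel it against the \(z_2\)-part and divide by \(-x/(y-x)\). Only then does \(G-Z=\frac{x-y}{x-1}\,{}_4F_3\) appear. Matching low-order Taylor coefficients does not substitute for this.

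The paper never passes through the combined factor \(z_{\ell_1}-xz_{\ell_1+1}\). It reruns the truncation argument and \autoref{lem:limit} directly on \(\sum_m\frac{(y-x)\prod_{p<m}(1+\frac{y-x}{p})}{m^3\prod_{q\le m}(1-\frac{x}{q})}\). This equals \(\frac{x-y}{x-1}\,{}_4F_3\) termwise via \(\poch{1-x}{m}=(1-x)\poch{2-x}{m-1}\), and its limit is \(\sum_{a,b}\reg_{\shuffle,0}\zeta(\{1\}^a,3,\{1\}^b)(y-x)^{a+1}(-x)^b\). The \({}_3F_2\) and the single-zeta \(z_2\)-part therefore never arise.
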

	
		\begin{proof}
			Since
			\[
				 \frac{\poch{1}{m-1} \poch{y-x}{m}}{\poch{1-x}{m} \cdot m} \frac{1}{m!} 
				= \frac{x-y}{x-1} \frac{\poch{1}{m-1}^3 \poch{1-x+y}{m-1}}{\poch{2}{m-1}^2 \poch{2-x}{m-1}} \frac{1}{(m-1)!} \,,
			\]
			we have that
			\[
				 \sum_{m=1}^\infty \frac{\poch{1}{m-1} \poch{y-x}{m}}{\poch{1-x}{m} \cdot} \frac{1}{m!}  = \frac{x-y}{x-1} \pFq{4}{3}{1,1,1,1-x+y}{2,2,2-x}{1} \,.
			\]
			On the other hand,
			\[
				\frac{\poch{1}{m-1} \poch{y-x}{m}}{\poch{1-x}{m} \cdot m} \frac{1}{m!} = 
				\frac{(y-x) \prod_{p < m} \big(1 + \frac{y-x}{p} \big)}{m^3 \prod_{q \leq m} \big( 1 - \frac{x}{q} \big)} \,.
			\]
			Using the same argument as in the proof of \autoref{thm:phiell:gs}, we have
			\begin{align*}
				& \sum_{m=1}^N \frac{(y-x) \prod_{p < m} \big(1 + \frac{y-x}{p} \big)}{m^3 \prod_{q \leq m} \big( 1 - \frac{x}{q} \big)} \\
				& = (y-x)  \prod_{q \leq N} \Big( 1 - \frac{x}{q} \Big)^{-1} \cdot 
				\sum_{m=1}^N \prod_{p < m} \bigg(1 + \frac{y-x}{p} \bigg) \frac{1}{m^3} \prod_{m < q \leq N} \Big( 1 - \frac{x}{q} \Big) \\
				& = \sum_{i=0}^\infty \zeta_{\leq N}^\star(\{1\}^i) x^i \cdot
				\sum_{a,b=0}^\infty \zeta_{\leq N}(\{1\}^a, 3, \{1\}^b) (y-x)^{a+1} (-x)^b \\
				& = \sum_{a,b=0}^\infty \bigg( \sum_{i=0}^b \zeta_{\leq N}(\{1\}^a, 3, \{1\}^{b-i}) \cdot (-1)^i \zeta_{\leq N}^\star(\{1\}^i) \bigg) (y-x)^{a+1} (-x)^b
			\end{align*}
			From \autoref{lem:limit} and then applying duality, we obtain that the limit as \( N \to \infty \) is
			\begin{align*}
			\xrightarrow{N\to\infty} {} & 
			 \sum_{a,b=0}^\infty \reg_{\shuffle,0} \zeta(\{1\}^a, 3, \{1\}^b)  (y-x)^{a+1} (-x)^b = 
			 \sum_{a,b=0}^\infty \reg_{\shuffle,0} \zeta_b(1, a+2) (y-x)^{a+1} (-x)^b \,,
			\end{align*}
			Using the regularisation formula
			\[
			\reg_{\shuffle,0} \zeta_b(1, a)
			= (-1)^b \sum_{i_1 + i_2 = b} \tbinom{a + 1 + i_2}{i_2} \reg_{\shuffle,0} \zeta(1 + i_1, a + i_2) 
			\]
			we find 
			\begin{align*}
			 \sum_{a,b=0}^\infty \reg_{\shuffle,0} \zeta_b(1, a+2) (y-x)^{a+1} (-x)^b
			& = \sum_{c,d=0}^\infty \zeta(c+1, d+2) x^{c} y^{d+1} - \sum_{c,d=0}^\infty \zeta(c+1, d+2) x^{c+d+1} \,.
			\end{align*}
			Using the sum formula \( \sum_{i=1}^{n-2} \zeta(i,n-i) = \zeta(n) \), gives
			\[
				\sum_{c,d=0}^\infty \zeta(c+1, d+2) x^{c+d+1} 
				= 
				\sum_{k=0}^\infty \zeta(k+3) x^{k+1}
				= 
				\sum_{k=0}^\infty \zeta(k+3) x^{k+1} = -\zeta(2) - \frac{\psi(1-x) + \gamma}{x} \,,
			\]
			where \( \psi(x) = \frac{\dd}{\dd x} \log{\Gamma(x)} \) (for the Taylor series, see for example \cite[\S5.7(i), Eqn. (5.7.4)]{NIST:DLMF}).  So the result follows.
		\end{proof}	

	\bibliographystyle{habbrv2} 
	\bibliography{bibliography.bib} 

\end{document}